\newtheorem{thm}{Theorem}[section]
\newtheorem{cor}[thm]{Corollary}
\newtheorem{lemma}[thm]{Lemma}
\newtheorem{prop}[thm]{Proposition}
\newtheorem{conj}[thm]{Conjecture}
\newtheorem{que}[thm]{Question}
\newtheorem{claim}[thm]{Claim}
\theoremstyle{definition}
\newtheoremstyle{cases}
  {12pt plus 6 pt}%       Space above
  {2pt}%       Space below
  {\bfseries}   %       Body font
  {}%          Indent amount (empty = no indent, \parindent = para indent)
  {\bfseries}% Thm head font
  {.}%         Punctuation after thm head
  {.5em}%      Space after thm head: " " = normal interword space;
\theoremstyle{cases}
\numberwithin{subcase}{case} \numberwithin{subsubcase}{subcase}
\numberwithin{equation}{subsection} 
\begin{document}

\title{Graph manifolds $\mathbb Z$-homology $3$-spheres and taut foliations\footnotetext{2000 Mathematics Subject Classification. Primary 57M25, 57M50, 57M99}}

\author{Michel Boileau}  
\thanks{Michel Boileau was partially supported by l'Institut Universitaire de France.}
\address{Institut de math\'etiques de Toulouse, UMR 5219 et Institut Universitaire de France, Universit\'e Paul Sabatier 31062 Toulouse Cedex 9, France.}
\email{boileau@math.univ-toulouse.fr }
 
\author[Steven Boyer]{Steven Boyer}
\thanks{Steven Boyer was partially supported by NSERC grant RGPIN 9446-2008}
\address{D\'epartement de Math\'ematiques, Universit\'e du Qu\'ebec \`a Montr\'eal, 201 avenue du Pr\'esident-Kennedy, Montr\'eal, QC H2X 3Y7.}
\email{boyer.steven@uqam.ca}
\urladdr{http://www.cirget.uqam.ca/boyer/boyer.html}

\begin{abstract} 
We show that a graph manifold which is a $\mathbb Z$-homology $3$-sphere not homeomorphic to either $S^3$ or $\Sigma(2,3,5)$ admits a horizontal foliation. This combines with known results to show that the conditions of {\it not} being an L-space, of having a left-orderable fundamental group, and of admitting a co-oriented taut foliation, are equivalent for graph manifold $\mathbb Z$-homology $3$-spheres. 

\end{abstract}

\maketitle

\begin{center}
\today 
\end{center}

Throughout this paper we shall often use $\mathbb Q$-homology $3$-sphere to abbreviate {\it rational homology $3$-sphere} and $\mathbb Z$-homology $3$-sphere to abbreviate {\it integer homology $3$-sphere}. 

Heegaard Floer theory is  a package of  3-manifold homology 
 invariants developed by Ozsv\'ath and Szab\'o \cite{OS3}, \cite{OS2} which provides relatively powerful tools to
distinguish between manifolds. For a rational homology $3$-sphere $M$, the simplest version of these invariants comes in the form of $\mathbb Z/2$-graded abelian groups $\widehat{HF}(M)$ whose Euler characteristic satisfies:   $\chi (\widehat{HF}(M)) = \vert H_{1}(M)\vert$. In particular,  $\hbox{rank} \; \widehat{HF}(M) \geq \vert H_1(M) \vert $.

Ozsv\'ath and Szab\'o defined the family of $L$-{\it spaces} as the class of rational homology $3$-spheres $M$ for which the Heegaard Floer homology is as simple as possible. In other words, $\hbox{rank} \; \widehat{HF}(M ) = \vert H_1(M) \vert$. Examples of L-spaces include  the 3-sphere, lens spaces, and, more generally, manifolds admitting elliptic geometry. By Perelman's proof of the geometrisation conjecture, these are the closed 3-manifolds with finite fundamental group. Beyond these examples, Ozsv\'ath and Szab\'o have shown that the 2-fold branched covering of any non-split alternating link is an L-space, thus providing infinitely many examples of hyperbolic L-spaces. None of these examples are integer homology $3$-spheres, except for $S^3$ and the Poincar\'e sphere $\Sigma(2,3,5)$. 
 
The last decade has shown that the conditions of {\it not} being an L-space, of having a left-orderable fundamental group, and of admitting a $C^2$  co-oriented taut foliation, are strongly correlated for an irreducible $\mathbb Q$-homology $3$-sphere $W$: 
\begin{itemize}

\vspace{0cm} \item the three conditions are equivalent for non-hyperbolic geometric manifolds (cf. \cite{BRW}, \cite{LS}, \cite{BGW}).

\vspace{.3cm} \item Ozsv\'ath and Sz\'abo have shown that if $W$ admits a $C^2$ co-orientable taut foliation then it is not an L-space \cite[Theorem 1.4]{OS1}.

\vspace{.3cm} \item Calegari and Dunfield have shown that the existence of a co-orientable taut foliation on an atoroidal $W$ implies that the commutator subgroup 
$[\pi_1(W), \pi_1(W)]$ is a left-orderable group \cite[Corollary 7.6]{CD}. 

\vspace{.3cm} \item Boyer, Gordon and Watson have conjectured that $W$ has a left-orderable fundamental group if and only if it is not an L-space and have provided supporting evidence in \cite{BGW}.

\vspace{.3cm} \item Lewallen and Levine have shown that strong L-spaces do not have left-orderable fundamental groups \cite{LL}. 

\end{itemize}

Recall that a {\it graph manifold} is a compact, irreducible, orientable $3$-manifold whose Jaco-Shalen-Johannson (JSJ) pieces are Seifert fibred spaces. In this paper we focus on the case that $W$ is an integer homology $3$-sphere, and in particular one which is a graph manifold.  

We begin with the statement of the {\it Heegaard-Floer Poincar\'e conjecture}, due to Ozsv\'ath and Sz\'abo. 

\begin{conj} {\rm (Ozsv\'ath-Sz\'abo)}
An irreducible integer homology $3$-sphere is an L-space if and only if it is either $S^3$ or the Poincar\'e homology $3$-sphere $\Sigma(2,3,5)$. 
\end{conj}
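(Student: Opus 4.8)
The plan is to prove the nontrivial direction in its contrapositive form: every irreducible integer homology $3$-sphere $W$ other than $S^3$ and $\Sigma(2,3,5)$ fails to be an L-space. The only tool that currently converts geometric input into the conclusion ``$W$ is not an L-space'' unconditionally is \cite[Theorem 1.4]{OS1}, namely that a $C^2$ co-oriented taut foliation forces $W$ off the list of L-spaces. So the whole strategy reduces to producing a co-oriented taut foliation on every such $W$. The reverse implication is immediate from the facts recalled above: $S^3$ and $\Sigma(2,3,5)$ both carry elliptic geometry and hence are L-spaces, so they are genuinely the two exceptions and no foliation construction can (or should) apply to them.

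First I would pass to the geometric decomposition. By Perelman's geometrisation together with the JSJ splitting, an irreducible $\mathbb Z$-homology $3$-sphere $W$ is one of: (i) spherical, (ii) Seifert fibred with infinite $\pi_1$, (iii) a graph manifold with nontrivial JSJ decomposition, (iv) hyperbolic, or (v) a \emph{mixed} manifold whose JSJ decomposition contains at least one hyperbolic piece. Case (i) produces exactly $S^3$ and $\Sigma(2,3,5)$, the only spherical $\mathbb Z$-homology spheres, as a homology computation on spherical space forms shows; these are precisely the allowed exceptions, so there is nothing to prove for them.

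Cases (ii) and (iii) I would dispatch via horizontal foliations. A Seifert fibred $\mathbb Z$-homology sphere with infinite $\pi_1$ is a Brieskorn sphere $\Sigma(a_1,\dots,a_n)$ whose base orbifold is hyperbolic, and by the Eisenbud--Hirsch--Neumann and Jankins--Neumann horizontality criterion every such space carries a horizontal foliation; the unique Seifert fibred $\mathbb Z$-homology sphere excluded by that criterion is $\Sigma(2,3,5)$ itself. The graph manifold case (iii) is exactly the content of the present paper's main theorem, which I take as given: every graph manifold $\mathbb Z$-homology sphere other than $S^3$ and $\Sigma(2,3,5)$ admits a horizontal, hence co-oriented taut, foliation. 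Feeding these foliations into \cite[Theorem 1.4]{OS1} shows none of the manifolds in (ii) or (iii) is an L-space.

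The hard part, and honestly the genuinely open part, is cases (iv) and (v). For a hyperbolic $W$ there is at present no general method to construct a co-oriented taut foliation, nor to decide left-orderability of $\pi_1(W)$; and since the Boyer--Gordon--Watson equivalence between left-orderability and ``not an L-space'' \cite{BGW} is still only conjectural, even exhibiting a left-ordering (for instance via \cite{CD} once a foliation were in hand) would not close the argument unconditionally. One would therefore need a new foliation-construction technique tailored to closed hyperbolic homology spheres, or a direct Heegaard Floer rank estimate $\mathrm{rank}\,\widehat{HF}(W) > 1$. For the mixed case (v) the natural route is to build taut foliations on the individual JSJ pieces and glue them along the tori so that the union remains co-orientable and taut, but controlling the boundary slopes of a foliation on a hyperbolic piece, so that they match the Seifert pieces and the gluing preserves tautness, reintroduces exactly the hyperbolic difficulty. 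I thus expect the spherical, Seifert, and graph-manifold cases to be routine given the results available, while the hyperbolic and mixed cases form the essential obstruction, and are precisely why the statement is recorded here as a conjecture rather than a theorem.
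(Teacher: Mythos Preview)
Your analysis is accurate, but it is worth being explicit about the logical status here: the statement is a \emph{conjecture}, and the paper does not prove it. There is therefore no ``paper's own proof'' to compare against. What the paper establishes is precisely your case (iii), the graph manifold case, via Theorem~\ref{taut} and Corollary~\ref{consequences}(1); case (ii) follows from Proposition~\ref{seifert case}. Your decomposition into geometric types is the natural one, and your identification of cases (iv) and (v) as the genuine obstruction is correct and matches the paper's implicit framing.

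One small refinement: in case (i) you assert that $S^3$ and $\Sigma(2,3,5)$ are the only spherical $\mathbb Z$-homology spheres. This is true but deserves a word of justification---spherical space forms are quotients $S^3/\Gamma$ with $\Gamma$ finite, and $H_1 = \Gamma^{\mathrm{ab}}$ vanishes only for the trivial group and the binary icosahedral group among finite subgroups of $SO(4)$ acting freely. Otherwise your write-up is a fair summary of why the conjecture remains open and what the present paper contributes toward it.
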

The truth of this striking conjecture would imply that among prime $3$-manifolds, the $3$-sphere is characterized by its Heegaard-Floer homology together with the vanishing of its Casson invariant (or even its $\mu$ invariant). It is known to hold in many instance, for example for integer homology $3$-spheres obtained by surgery on a knot in $S^3$ \cite[Proposition 5]{HW}. 
It lends added interest to the questions: 
\begin{itemize}

\vspace{0cm} \item  Which $\mathbb Z$-homology $3$-spheres admit co-oriented taut foliations?

\vspace{.5cm} \item Which $\mathbb Z$-homology $3$-spheres have left-orderable fundamental groups?

\end{itemize}

We assume throughout this paper that foliations are $C^2$-smooth. The works of Eisenbud-Hirsh-Neumann \cite{EHN}, Jankins-Neumann \cite{JN}
and Naimi \cite{Na}  give necessary and sufficient conditions for a Seifert fibered 3-manifold to carry a horizontal 
foliation. It follows from their work that a Seifert manifold $\mathbb Z$-homology $3$-sphere is an L-space if and only if it is either $S^3$ or the Poincar\'e homology $3$-sphere $\Sigma(2,3,5)$ (cf. Proposition \ref{seifert case}; see also \cite{LS}, \cite{CM}). More recently, Clay, Lidman and Watson have shown that the fundamental group of a graph manifold $\mathbb Z$-homology $3$-sphere is left-orderable if and only if it is neither $S^3$ nor $\Sigma(2,3,5)$ \cite{CLW}. (By convention, the trivial group is {\it not} left-orderable.) The main result of this paper proves Ozsv\'ath-Sz\'abo conjecture for $\mathbb Z$-homology $3$-spheres which are graph manifolds: we show that a graph manifold $\mathbb Z$-homology $3$-sphere admits a co-oriented taut foliation if and only if it is neither $S^3$ nor $\Sigma(2,3,5)$. Before stating the precise version of our result, we need to introduce some definitions.

A {\it transverse loop} to a codimension one foliation $\mathcal{F}$ on a $3$-manifold $M$ is a loop in $M$ which is everywhere transverse to $\mathcal{F}$. 
A codimension one foliation on a $3$-manifold $M$ is {\it taut} if each of its leaves meets a transverse loop. 

A foliation is {\it $\mathbb R$-covered} if the leaf space of the pull-back foliation on the universal cover $\widetilde M$ of $M$ is homeomorphic to the real line. 

A foliation on a $\mathbb Z$-homology $3$-sphere is always co-orientable.

We assume that the pieces of a graph manifold are equipped with a fixed Seifert structure. Note that this structure is unique up to isotopy when the graph manifold is a $\mathbb Z$-homology $3$-sphere (cf. Proposition \ref{horizontal}(2)). 

A surface in a graph manifold $W$ is {\it horizontal} if it is transverse to the Seifert fibres of each piece of $W$. It is {\it rational} if its intersection with each JSJ torus is a union of simple closed curves. A codimension $1$ foliation of $W$ is {\it horizontal}, respectively {\it rational}, if each of its leaves has this property. Horizontal foliations are obviously taut and they are known to be $\mathbb R$-covered \cite[Proposition 7]{Br2}. Rational foliations on graph manifold $\mathbb Z$-homology $3$-spheres are necessarily horizontal (Lemma \ref{strongly rational not vertical}). 
Here is our main result. 

\begin{thm} \label{taut}
Let $W$ be a graph manifold which is a $\mathbb Z$-homology $3$-sphere and suppose that $W$ is neither $S^3$ nor $\Sigma(2,3,5)$. Then $W$ admits a rational foliation. 
\end{thm}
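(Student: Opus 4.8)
The plan is to reduce the construction of a rational foliation to a slope-matching problem on the JSJ graph of $W$ and then to solve that problem using the rigidity forced by the $\mathbb Z$-homology sphere condition. First I would dispose of the case in which $W$ is itself Seifert fibred: this is exactly Proposition \ref{seifert case}, which also explains the role of the two exceptional manifolds, since $S^3$ and $\Sigma(2,3,5)$ are Seifert fibred (and atoroidal), so they have trivial JSJ decompositions and cannot arise once the decomposition is non-trivial. I may therefore assume that $W$ splits along a non-empty family of JSJ tori into Seifert pieces $M_v$, indexed by the vertices $v$ of the JSJ graph $G$, with one torus $T_e$ for each edge $e$ of $G$. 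Before doing anything analytic I would record the structural consequences of $H_1(W)=0$, using Proposition \ref{horizontal}: each $M_v$ carries a unique (up to isotopy) Seifert structure over a planar base orbifold, the graph $G$ is a tree, and the gluing across each $T_e$ exchanges the fibre and section directions of the two adjacent pieces (the splice condition that preserves $H_1=0$). Fixing on each boundary torus the basis given by the Seifert fibre and a section, a horizontal foliation of $M_v$ meets that torus in a linear foliation whose slope I call its boundary slope.

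Second, on each piece I would invoke the horizontality criterion of Eisenbud-Hirsch-Neumann, Jankins-Neumann and Naimi (\cite{EHN}, \cite{JN}, \cite{Na}): a Seifert piece admits a horizontal foliation realizing a prescribed tuple of boundary slopes precisely when that tuple lies in an explicit region cut out by the Seifert invariants and the orbifold Euler number. The key features to extract are that, because every piece of a graph $\mathbb Z$-homology sphere has non-empty boundary, this region is a non-degenerate open set, and that it therefore contains rational tuples in the relevant ranges. Choosing rational boundary slopes forces the leaves to meet each boundary torus in circles, and by Lemma \ref{strongly rational not vertical} the resulting foliation is automatically rational, hence horizontal, rather than vertical.

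Third --- and this is the heart of the argument --- I would set up and solve the global matching problem. Across each torus $T_e$ the two boundary slopes contributed by the adjacent pieces must correspond under the gluing map; because that map interchanges fibre and section, this becomes a precise numerical compatibility between the two slopes. Encoding the admissible-slope regions of all pieces together with these edge compatibilities gives a finite system of linear constraints indexed by the edges of $G$, and here the fact that $G$ is a tree is decisive: I can root $G$ and propagate a consistent choice of rational slopes outward vertex by vertex, at each step using the openness of the admissible region on the new piece to keep the running slope inside the permitted range, with no cycle condition to obstruct the process. The $\mathbb Z$-homology sphere hypothesis is what guarantees both the tree structure and the compatibility of the per-piece Euler-number balance relations with the edge matchings.

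The \emph{main obstacle} is exactly this uniform solvability: I must show that for \emph{every} graph manifold $\mathbb Z$-homology sphere with non-trivial JSJ decomposition the admissible intervals stay genuinely open and the matching system can always be solved over the tree, whereas the degeneracies that would block the construction occur only in the Seifert-fibred setting and there reproduce precisely $S^3$ and $\Sigma(2,3,5)$. I would therefore spend the bulk of the work bounding the admissible slope ranges of the individual pieces from below and verifying the propagation step, treating the borderline pieces (few exceptional fibres, short Seifert data) by direct computation. Once a consistent family of rational boundary slopes is in hand, the final step is routine: the horizontal foliations on the pieces all meet each $T_e$ in the same foliation by circles of matching slope, so after arranging compatible co-orientations and interpolating holonomy in a collar they glue to a $C^2$ rational foliation of $W$, which is taut and $\mathbb R$-covered by \cite{Br2}.
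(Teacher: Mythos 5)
Your overall scheme---reduce to a slope-matching problem over the JSJ tree using the Eisenbud--Hirsch--Neumann/Jankins--Neumann/Naimi admissible regions on each Seifert piece---is the right general shape, but the step you defer as the ``main obstacle'' \emph{is} the theorem, and the mechanism you propose for it (openness of the admissible regions plus rooted propagation over the tree) fails at exactly the points the paper has to work hardest. Openness of a piece's admissible region does not ensure that the slope handed across an edge by the parent lies in the child's region; worse, the slope that must be hit can lie on the \emph{boundary} of the closure of the admissible set. Concretely, by Propositions \ref{limit} and \ref{limit235} (summarized in Corollary \ref{s3 and s235}), if a leaf piece $X$ is a torus knot exterior or a Seifert fibred knot exterior in $\Sigma(2,3,5)$, then the slope $\lambda_Y$ whose filling is a $\mathbb Z$-homology sphere can be an endpoint of $\overline{U}$, so $X$ only detects slopes converging to $\lambda_Y$ from one side, and one must prove that the complementary side delivers detected slopes on the correct side(s). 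This is why the paper does not solve a simultaneous matching system at all: it inducts on the number of JSJ pieces, filling the complement $Y$ along $\alpha_n = \lambda_X + n\lambda_Y$, using Lemma \ref{distance 1} to show at most two consecutive $n$ give bad ($S^3$, $\Sigma(2,3,5)$, or reducible) fillings, and Proposition \ref{strongly detected slopes}(2) for pieces not contained in $S^3$ or $\Sigma(2,3,5)$. Your claim that the degeneracies ``occur only in the Seifert-fibred setting'' is also false: the hardest case in the paper is non-Seifert, namely when $\lambda_Y = \phi_Y$, which forces the adjacent piece to be a product $F \times S^1$ (Lemma \ref{horizontal}(3)) and, when $|\partial P| = 3$, requires the auxiliary construction of a smaller graph manifold $W_2$ by splicing a trefoil exterior onto $Y_2$ (Claims \ref{horizontal1} and \ref{horizontal2})---a genuinely non-obvious move that no amount of ``openness plus tree propagation'' produces.

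A secondary error: the gluing across a JSJ torus of a graph $\mathbb Z$-homology sphere does \emph{not} in general ``exchange the fibre and section directions'' of the adjacent Seifert structures. What the homology-sphere condition gives (via Lemma \ref{horizontal} and the splice formalism) is that the meridian/longitude pairs of the two complementary $\mathbb Z$-homology solid tori are interchanged; the fibre slope of a piece is typically far from either (e.g.\ for the trefoil exterior $\phi = 6\mu + \lambda$), and $\Delta(\phi_X,\phi_Y) \neq 0$ is all one knows a priori. This matters because your edge-compatibility constraints, and hence the linear system you propose to solve, are set up in the wrong bases. Finally, note that the multi-boundary EHN matching you envisage (prescribing a full tuple of boundary slopes on a piece with several boundary tori) is substantially more delicate than the one-boundary-at-a-time version the paper uses; the paper deliberately avoids it by reducing, through Dehn filling and induction, to the knot-exterior statements of Propositions \ref{strongly detected slopes}, \ref{limit}, and \ref{limit235}.
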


An action of a group $G$ on the circle is called {\it minimal} if each orbit is dense. 

A homomorphism $\rho: G \to \hbox{Homeo}_+(S^1)$ is called {\it minimal} if the associated action on $S^1$ is minimal. 

\begin{cor} \label{consequences} 
Let $W$ be a graph manifold which is a $\mathbb Z$-homology $3$-sphere and suppose that $W$ is neither $S^3$ nor $\Sigma(2,3,5)$. Then

$(1)$ $W$ is not an L-space.

$(2)$ $\pi_1(W)$ admits a minimal homomorphism $\rho$ with values in $\hbox{Homeo}_+(S^1)$ whose image contains a nonabelian free group.

$(3)$ {\rm (Clay-Lidman-Watson \cite{CLW})} $\pi_1(W)$ is left-orderable. 

\end{cor}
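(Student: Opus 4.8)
\emph{Proof plan.} Everything is meant to be deduced from Theorem~\ref{taut} together with the facts recorded just before its statement. By Theorem~\ref{taut} the manifold $W$ carries a rational foliation $\mathcal F$, and by Lemma~\ref{strongly rational not vertical} any rational foliation on a graph manifold $\mathbb Z$-homology $3$-sphere is horizontal. A horizontal foliation is taut, is co-orientable (automatic here, since $W$ is a $\mathbb Z$-homology $3$-sphere), and is $\mathbb R$-covered by \cite[Proposition 7]{Br2}; by our standing convention it is also $C^2$. Thus $W$ admits a $C^2$, co-oriented, taut, $\mathbb R$-covered foliation $\mathcal F$, and I would prove the three assertions as deductions from the existence of such an $\mathcal F$. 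Statement $(1)$ is then immediate: a $\mathbb Q$-homology $3$-sphere carrying a $C^2$ co-orientable taut foliation is not an L-space by \cite[Theorem 1.4]{OS1}, so $W$ is not an L-space.

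For $(2)$ the plan is to use the $\mathbb R$-covered structure to manufacture the circle action. Pulling $\mathcal F$ back to the universal cover $\widetilde W$ gives a foliation $\widetilde{\mathcal F}$ whose leaf space is homeomorphic to $\mathbb R$, and the deck action induces $\phi:\pi_1(W)\to\hbox{Homeo}_+(\mathbb R)$ (orientation is preserved because $\mathcal F$ is co-oriented). I would compactify this line action to the universal circle of the $\mathbb R$-covered foliation, obtaining $\rho_0:\pi_1(W)\to\hbox{Homeo}_+(S^1)$. The key geometric input is that $W$ is \emph{not} Seifert fibred: by Proposition~\ref{seifert case} the only Seifert $\mathbb Z$-homology sphere L-spaces are $S^3$ and $\Sigma(2,3,5)$, so for our $W$ the JSJ decomposition has at least one gluing torus and the base orbifolds of the pieces are hyperbolic. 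This should force the holonomy of $\mathcal F$ to be non-elementary, so that $\rho_0$ has no finite orbit and is not conjugate into the rotation group. Its unique minimal set then yields, after monotonically collapsing the complementary gaps, a genuinely minimal action $\rho:\pi_1(W)\to\hbox{Homeo}_+(S^1)$, still with nonabelian image. Finally I would invoke the Margulis--Ghys dichotomy for subgroups of $\hbox{Homeo}_+(S^1)$: such a subgroup either preserves a probability measure or contains a nonabelian free group. A \emph{minimal} action preserving a probability measure is conjugate into $SO(2)$ and hence has abelian image; since $\rho(\pi_1(W))$ is nonabelian it must contain a nonabelian free group, which gives $(2)$.

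Statement $(3)$ is precisely the theorem of Clay--Lidman--Watson \cite{CLW} quoted in the introduction, so it may simply be cited. Alternatively it follows from $(2)$ together with the homology hypothesis: since $W$ is aspherical and a $\mathbb Z$-homology $3$-sphere, $H^2(\pi_1(W);\mathbb Z)\cong H^2(W;\mathbb Z)\cong H_1(W;\mathbb Z)=0$, so the Euler class of $\rho$ vanishes and $\rho$ lifts to $\widetilde\rho:\pi_1(W)\to\hbox{Homeo}_+(\mathbb R)$ (monotone semiconjugacy does not change the Euler class, so the lift exists for the minimal model as well). A faithfulness argument in the spirit of Calegari--Dunfield \cite{CD} and \cite{BRW}, adapted to the $\mathbb R$-covered setting, would then exhibit $\pi_1(W)$ as a subgroup of $\hbox{Homeo}_+(\mathbb R)$, hence as left-orderable.

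I expect the main obstacle to be $(2)$, and specifically the organization of the $\mathbb R$-covered leaf-space action into an honest circle action together with the proof that the resulting (semiconjugate) action is minimal with nonabelian image. Once a minimal non-elementary circle action is in hand, the free subgroup (via the Margulis--Ghys dichotomy), the non-L-space conclusion (via \cite{OS1}), and the left-orderability (via \cite{CLW}, or via the vanishing Euler class) are comparatively formal.
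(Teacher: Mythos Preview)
Your treatments of (1) and (3) are essentially the paper's: Theorem~\ref{taut} gives a co-oriented taut foliation, \cite[Theorem~1.4]{OS1} handles (1), and (3) follows from (2) by lifting the circle action to $\hbox{Homeo}_+(\mathbb R)$ using $H^2(W)=0$ and invoking \cite[Theorem~1.1(1)]{BRW}. That part is fine.

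There are two genuine problems in your plan for (2). First, the sentence ``$W$ is \emph{not} Seifert fibred: by Proposition~\ref{seifert case} the only Seifert $\mathbb Z$-homology sphere L-spaces are $S^3$ and $\Sigma(2,3,5)$'' is a non sequitur. There are plenty of Seifert $\mathbb Z$-homology spheres that are neither $S^3$ nor $\Sigma(2,3,5)$ (e.g.\ $\Sigma(2,3,7)$), and the corollary must cover them. So you cannot assume a JSJ torus exists, and your subsequent ``non-elementary holonomy'' claim has no foundation in the Seifert case either.

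Second, and more seriously, your construction of the $S^1$-action is not coherent. You write that you would ``compactify this line action to the universal circle of the $\mathbb R$-covered foliation''. These are different objects: the leaf space $\mathbb R$ and Thurston's universal circle $S^1_{univ}$ are not related by any compactification; $S^1_{univ}$ is built from the circles at infinity of the leaves after one has a leafwise hyperbolic metric. The paper does not shortcut this. It proves (2) via a separate Lemma~\ref{minimal}, valid for \emph{any} taut foliation on a $\mathbb Z$-homology sphere, and that lemma requires real work even in the $\mathbb R$-covered case: one first rules out compact leaves and invariant transverse measures (Goodman, Plante), then applies Candel's uniformization to get leafwise hyperbolicity and invokes Thurston's universal circle; in the $\mathbb R$-covered case one must still split into the ruffled and uniform subcases, using Calegari's work in the former and slithering over $S^1$ in the latter, and in each case argue separately that the resulting action has no finite orbit before semiconjugating to a minimal one. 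Your ``this should force the holonomy of $\mathcal F$ to be non-elementary'' covers none of this. Once a minimal action exists, your endgame is correct and matches the paper: $\pi_1(W)$ is perfect, so a minimal action cannot be abelian, and Margulis' theorem (via \cite[Corollary~5.15]{Gh}) then produces the free subgroup.
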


\begin{proof}
Since $W$ is a $\mathbb Z$-homology $3$-sphere, the taut foliation $\mathcal{F}$ given by Theorem \ref{taut} is co-orientable. Thus $W$ cannot be an L-space \cite[Theorem 1.4]{OS1}. Assertion $(3)$ is a consequence of the assertion $(2)$; since $H^2(W) \cong \{0\}$, the homomorphism $\pi_1(W) \to \hbox{Homeo}_+(S^1)$ lifts to a homomorphism $\pi_1(W) \to \widetilde{\hbox{Homeo}}_+(S^1) \leq \hbox{Homeo}_+(\mathbb R)$ with non-trivial image. Theorem 1.1(1) of \cite{BRW} now implies that $\pi_1(W)$ is left-orderable. (This also follows from the fact that $\pi_1(W)$ acts non-trivially on $\mathbb R$ by orientation-preserving homeomorphisms since $\mathcal{F}$ is co-oriented and $\mathbb R$-covered \cite[Proposition 7]{Br2}.) Finally, assertion $(2)$ follows from Lemma \ref{minimal} below.
\end{proof}

\begin{lemma}\label{minimal} 
Let $M$ be a $\mathbb Z$-homology $3$-sphere which admits a taut foliation $\mathcal{F}$. Then $\pi_1(M)$ admits a minimal homomorphism $\rho: \pi_1(M) \to \hbox{Homeo}_+(S^1)$ whose image contains a nonabelian free group.
\end{lemma}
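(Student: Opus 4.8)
The plan is to produce the circle action from the taut foliation via its \emph{universal circle}, to use the homological triviality of $M$ to rule out any invariant probability measure, and finally to invoke the standard structure theory of groups acting on $S^1$ to extract both minimality and a nonabelian free subgroup.

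First I would record some consequences of the hypotheses. Since $M$ is a $\mathbb Z$-homology $3$-sphere carrying a taut foliation, it is neither $S^3$ nor a spherical space form, so $\pi_1(M)$ is infinite; moreover $H_1(M)=0$ makes $\pi_1(M)$ perfect, and $H_2(M;\mathbb R)=0$. By Novikov and Rosenberg the universal cover is $\mathbb R^3$, the leaves of the pulled-back foliation $\widetilde{\mathcal F}$ are properly embedded planes, and $\pi_1(M)$ acts on the (simply connected, possibly non-Hausdorff) leaf space preserving the co-orientation. To this data Thurston's construction (in the form developed by Calegari--Dunfield and Fenley) attaches a \emph{universal circle} $S^1_{\mathrm{univ}}$ together with a nontrivial, co-orientation-preserving representation $\rho_0 \colon \pi_1(M) \to \mathrm{Homeo}_+(S^1_{\mathrm{univ}})$ recording the cyclic order of the ends of leaves; in the $\mathbb R$-covered case relevant to Theorem \ref{taut} this circle is especially concrete.

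The heart of the argument is to show that $\rho_0$ admits no invariant Borel probability measure, and I expect this to be the main obstacle, since it is where the topological hypothesis must be converted into a dynamical statement. Here I would combine two inputs. On the homological side, a holonomy-invariant transverse measure for $\mathcal F$ would produce a Ruelle--Sullivan current whose class in $H_2(M;\mathbb R)$ pairs positively with a Rummler--Sullivan $2$-form (one exists because $\mathcal F$ is taut); as $H_2(M;\mathbb R)=0$, no such transverse measure can exist. On the dynamical side, an invariant probability measure on $S^1_{\mathrm{univ}}$ would give a rotation-number homomorphism $\pi_1(M)\to \mathbb R/\mathbb Z$, necessarily trivial because $\pi_1(M)$ is perfect; the support of the measure would then be pointwise fixed, so $\rho_0$ would have a global fixed point and $\mathcal F$ would inherit an invariant transverse structure. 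Either route contradicts tautness, and in particular rules out a global fixed point of $\rho_0$. Pinning down the precise correspondence between the universal-circle dynamics and the transverse geometry of $\mathcal F$ is the delicate step.

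With no invariant measure in hand the conclusion follows from classical results. By Margulis' theorem, a subgroup of $\mathrm{Homeo}_+(S^1)$ with no invariant probability measure contains a nonabelian free group, which gives the free-subgroup assertion. For minimality, the absence of an invariant measure rules out finite orbits, so by Ghys' structure theorem the action has a unique minimal set $\Lambda$: if $\Lambda = S^1_{\mathrm{univ}}$ the action is already minimal, and otherwise $\Lambda$ is a Cantor set whose complementary gaps can be collapsed by a monotone degree-one map to yield a semi-conjugate \emph{minimal} action $\rho\colon \pi_1(M)\to \mathrm{Homeo}_+(S^1)$. This $\rho$ still has no invariant probability measure, for a fully supported one would force the perfect group $\pi_1(M)$ to act trivially, contradicting minimality of an infinite-image action; a final application of Margulis therefore shows that its image contains a nonabelian free group. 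Identifying $S^1$ with $S^1_{\mathrm{univ}}$ through the collapsing map, $\rho$ is the desired minimal homomorphism, completing the proof.
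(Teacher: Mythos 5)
Your overall architecture (universal circle, rule out invariant measures, then Margulis plus Ghys semiconjugacy) is the right skeleton, and two of your ingredients are genuinely fine: the homological step showing $\mathcal{F}$ has no holonomy-invariant transverse measure (your Ruelle--Sullivan argument is a legitimate substitute for the paper's route via Goodman \cite{Go} and Plante \cite{Pla}; in the paper this fact is used to make Candel's uniformization theorem \cite{CC1} applicable so that the universal circle of \cite{CD} exists at all), and the endgame (Margulis \cite{Gh} plus perfectness of $\pi_1(M)$, and collapsing the gaps of the unique minimal set to get a minimal semiconjugate action). But there is a genuine gap at exactly the step you flag as ``delicate'': the claim that a global fixed point of $\rho_{univ}$ would make $\mathcal{F}$ ``inherit an invariant transverse structure'' contradicting tautness. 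No such implication is available as a theorem, and the absence of a transverse invariant measure for $\mathcal{F}$ does not transfer to the universal-circle action by any general principle --- the universal circle records asymptotic leafwise data (ends of leaves at infinity), not transverse data, so an invariant point or measure upstairs does not produce a transverse measure downstairs. Since this is the only mechanism your proof offers for excluding finite orbits, the proof as written does not close.

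What the paper does instead is a trichotomy on the leaf space $L$ of $\widetilde{\mathcal{F}}$, with a different argument in each case, and in no case is the contradiction obtained from transverse measures. If $\mathcal{F}$ is $\mathbb{R}$-covered, one first collapses foliated $I$-bundles (Fenley \cite{Fen}) to make $\mathcal{F}$ minimal; in the ruffled case the universal-circle action is already minimal by \cite[Lemma 5.2.2]{Ca1}, while in the uniform case one blows down to a slithering and excludes finite orbits by a purely topological argument: a finite orbit would force a finite-index subgroup of $\pi_1(M)$ to act freely and properly discontinuously on a fibre of $\widetilde{M} \to S^1$, i.e.\ a closed $3$-manifold group would be a surface group. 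If $L$ branches in both directions, periodic orbits are excluded by \cite[Lemma 5.5.3]{Ca3} applied to finite covers, crucially using that $\rho_{univ}$ is \emph{faithful} when $L$ branches. If $L$ has one-sided branching, a finite orbit plus Matsumoto's theorem \cite{Mat} and perfectness gives a semiconjugacy to the trivial action, hence an uncountable set $\Sigma$ of global fixed points; the contradiction then comes from Calegari's density theorems \cite[Theorems 3.2.2 and 3.4.1]{Ca2}, which force $\Sigma$ to be dense in $S^1_{univ}$, contradicting faithfulness. So the fixed-point scenario is not refuted through the transverse geometry of $\mathcal{F}$ at all, but through faithfulness and the fine structure of the universal circle, and these inputs depend on which branch of the trichotomy one is in. To repair your proof you would need either to supply the missing bridge (fixed point $\Rightarrow$ transverse invariant structure), which is not known, or to replicate this case analysis.
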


\begin{proof}
A theorem of Margulis \cite[Corollary 5.15]{Gh} shows that the image of a minimal representation $\rho: \pi_1(M) \to \hbox{Homeo}_+(S^1)$ is either abelian or  contains a nonabelian free group. The former is not possible since $\pi_1(M)$ is perfect, so to complete the proof we must show that such a representation exists. 

Since $M$ is a $\mathbb Z$-homology $3$-sphere, the co-orientability of $\mathcal{F}$ implies that it has no compact leaves (\cite[Proposition 2.1]{Go}. See also \cite[Part II, Lemma 3.8] {God}). Then by Plante's results \cite[Theorem 6.3, Corollaries 6.4 and 6.5]{Pla}, every leaf of $\mathcal{F}$ has exponential growth, and thus $\mathcal{F}$ admits no non-trivial holonomy-invariant transverse measure. Hence Candel's uniformization theorem \cite[Theorem 12.6.3]{CC1} applies to show that there is a Riemannian metric on $M$ such that $\mathcal{F}$ is leaf-wise hyperbolic. In this setting, Thurston's universal circle construction yields a homomorphism $\rho_{univ}$ of $\pi_1(M)$ with values in  $\hbox{Homeo}_+(S^1)$ \cite{CD}. 

If $L$ denotes the leaf space of the pullback $\widetilde{\mathcal{F}}$ of the foliation $\mathcal{F}$ to the universal cover $\widetilde M$ of $M$, then either $L$ is Hausdorff and $\mathcal{F}$ is $\mathbb{R}$-covered or $L$ has branching points. We treat these cases separately.

First suppose that $\mathcal{F}$ is $\mathbb{R}$-covered. Then Proposition 2.6 of \cite{Fen} implies that after possibly collapsing at most countably many foliated $I$-bundles, we can suppose that $\mathcal{F}$ is a minimal foliation (i.e. each leaf is dense). If $\mathcal{F}$ is ruffled (\cite[Definition 5.2.1]{Ca1}), Lemma 5.2.2 of \cite{Ca1} shows that the associated action of $\pi_1(M)$ on the universal circle of $\mathcal{F}$ is minimal, so we take $\rho = \rho_{univ}$. If $\mathcal{F}$ is not ruffled, it is uniform and so by \cite[Theorem 2.1.7]{Ca1}, after possibly blowing down some pockets of leaves, we can suppose that $\mathcal{F}$ slithers over the circle (\cite[Definition 2.1.6]{Ca1}). Thus if $\widetilde M$ denotes the universal cover of $M$, there is a locally trivial fibration $\widetilde M \to S^1$ whose fibres are unions of leaves of the pull back of $\mathcal{F}$ to $\widetilde M$. Further, the deck transformations of the cover $\widetilde M \to M$ act by bundle maps and so determine a homomorphism of $\pi_1(M)$ with values in $\hbox{Homeo}_+(S^1)$. If this representation has a finite orbit, then a finite index subgroup of $\pi_1(M)$ acts freely and properly discontinuously on a fibre of the fibration $\widetilde M \to S^1$. This is impossible as each fibre is a surface and a finite index subgroup of $\pi_1(M)$ is the fundamental group of a closed $3$-manifold. Therefore
by \cite[Propositions 5.6 and 5.8]{Gh}, the associated action on $S^1$  is semiconjugate to a minimal action $\rho: \pi_1(M) \to \hbox{Homeo}_+(S^1)$. 

In the case that $L$ branches, $\rho_{univ}: \pi_1(M) \to \hbox{Homeo}_+(S^1)$ is faithful. (See the last line of the first paragraph of \cite[\S 6.28]{CD}.) If it branches in both directions, an application of \cite[Lemma 5.5.3]{Ca3} to any finite cover of $M$ implies that $\rho_{univ}(\pi_1(M))$ has no periodic orbit. The conclusion then follows as above from \cite[Propositions 5.7 and 5.8]{Gh}. Thus we are left with the case where $\mathcal{F}$ has one-sided branching, say in the negative direction (cf. \cite{Ca2}). As in the case of $\mathbb{R}$-covered foliations, we can suppose every leaf dense by \cite[Theorem 2.2.7]{Ca2}. We need only show that the action associated to the faithful representation $\rho_{univ}: \pi_1(M) \to \hbox{Homeo}_+(S^1)$ has no finite orbits as otherwise \cite[Theorem 1.2]{Mat} implies that $\rho_{univ}$ is semiconjugate to an abelian representation, which is trivial since $\pi_1(M)$ is perfect. Hence the action of $\rho_{univ}(\pi_1(M))$ on $S^1$ has an uncountable compact set $\Sigma$ of  global fixed points. By \cite[Theorem 3.2.2]{Ca2} the image of $\Sigma$ is dense in almost every circle at infinity of the leaves of  $\widetilde{\mathcal{F}}$, and hence in $S^1_{univ}$ by the construction of the universal circle, see \cite[Theorem 3.4.1]{Ca2}. This contradicts the faithfullness of $\rho_{univ}$. When $M$ is hyperbolic, we can also obtain a contradiction to the existence of a finite orbit from that of topologically pseudo-Anosov elements of $\rho_{univ}(\pi_1(M))$ which have at most finitely many  fixed points in $S^1_{univ}$, see \cite[Lemma 4.2.5]{Ca2}. This completes the proof of the lemma and therefore that of Corollary \ref{consequences}. 
\end{proof}

The conclusion of Lemma \ref{minimal} combines with the two questions above to motivate the following question:

\begin{que}
For which aspherical $\mathbb Z$-homology $3$-spheres $M$ does $\pi_1(M)$ admit a minimal representation to $ \hbox{Homeo}_+(S^1)$?
\end{que}

Our discussion above yields the following corollary. 

\begin{cor} \label{consequences 2} 
The following conditions are equivalent for $W$ a graph manifold $\mathbb Z$-homology $3$-sphere:

$(a)$ $\pi_1(W)$ is left-orderable. 

$(b)$ $W$ is not an L-space.

$(c)$ $W$ admits a rational foliation. 
\qed

\end{cor}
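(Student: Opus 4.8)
The plan is to deduce the corollary by showing that each of the three conditions $(a)$, $(b)$, $(c)$ is equivalent to the single dichotomy
\[
(\star)\qquad W\ \text{is neither}\ S^3\ \text{nor}\ \Sigma(2,3,5),
\]
after which their mutual equivalence is automatic. Almost all of the substance is already in place: the implication $(\star)\Rightarrow(c)$ is exactly Theorem \ref{taut}, so what remains is to close the remaining links around this main result. I would organize the argument as a short cycle $(\star)\Rightarrow(c)\Rightarrow(b)\Rightarrow(\star)$ and then attach $(a)$ separately.

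For $(c)\Rightarrow(b)$ I would invoke Lemma \ref{strongly rational not vertical}: on a graph manifold $\mathbb Z$-homology $3$-sphere a rational foliation is necessarily horizontal, hence taut, and it is co-orientable because $W$ is a $\mathbb Z$-homology $3$-sphere. Ozsv\'ath and Szab\'o's obstruction \cite[Theorem 1.4]{OS1} then forces $W$ to fail to be an L-space, which is $(b)$. To close the cycle with $(b)\Rightarrow(\star)$ I would argue by contraposition: both $S^3$ and $\Sigma(2,3,5)$ admit elliptic geometry (each has finite fundamental group), so both are L-spaces, and therefore an L-space exclusion gives $(\star)$. At this stage $(b)$, $(c)$ and $(\star)$ are equivalent.

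It remains to fold in $(a)$. Here the implication $(\star)\Rightarrow(a)$ is Corollary \ref{consequences}(3), equivalently the theorem of Clay, Lidman and Watson \cite{CLW}. For the converse $(a)\Rightarrow(\star)$ I would use the two exceptional cases directly: $\pi_1(S^3)$ is trivial, and $\pi_1(\Sigma(2,3,5))$ is the finite binary icosahedral group, so by the convention of the introduction neither is left-orderable. Hence $(a)\Leftrightarrow(\star)$ as well, and combining with the previous paragraph yields the equivalence of $(a)$, $(b)$ and $(c)$.

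There is essentially no obstacle to overcome in the corollary itself, since every genuinely hard input — above all Theorem \ref{taut} — is already available; the corollary is an assembly step. The only point requiring care is the bookkeeping around the two exceptional manifolds: one must check that each of the three conditions truly fails for $S^3$ and for $\Sigma(2,3,5)$ (so that all the ``only if'' directions hold), which amounts to correctly citing that both are L-spaces with non-left-orderable fundamental group by virtue of admitting elliptic geometry.
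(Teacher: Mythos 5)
Your proof is correct and follows essentially the same route as the paper, which states this corollary with no written proof precisely because it is the assembly you describe: Theorem \ref{taut} gives $(\star)\Rightarrow(c)$, the chain rational $\Rightarrow$ horizontal $\Rightarrow$ taut co-orientable together with \cite[Theorem 1.4]{OS1} gives $(c)\Rightarrow(b)$ (as in Corollary \ref{consequences}), Clay--Lidman--Watson gives $(\star)\Rightarrow(a)$, and the exceptional manifolds $S^3$ and $\Sigma(2,3,5)$ fail all three conditions by finiteness of their fundamental groups. Your bookkeeping around the two exceptional cases matches the paper's treatment in Proposition \ref{seifert case}, so nothing is missing.
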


Sections 1 and 2 contain background material on, respectively, the pieces of graph manifold $\mathbb Z$-homology $3$-spheres and strongly detected slopes on the boundaries of Seifert fibered $\mathbb Z$-homology solid tori. Theorem \ref{taut} is proven in \S \ref{existence}.

\section{Pieces of graph manifold $\mathbb Z$-homology $3$-spheres} \label{sec: detecting slopes}

A torus $T$ in a $\mathbb Z$-homology $3$-sphere $W$ splits $W$ into two $\mathbb Z$-homology solid tori $X$ and $Y$. Let $\lambda_X$ and $\lambda_Y$ be primitive classes in $H_1(T)$ which are trivial in $H_1(X)$ and $H_1(Y)$ respectively. The associated slopes on $T$, which we also denote by $\lambda_X$ and $\lambda_Y$, are well-defined. We refer to these slopes as the {\it longitudes} of $X$ and $Y$. A simple homological argument shows that $X(\lambda_Y)$ and $Y(\lambda_X)$ are $\mathbb Z$-homology $3$-spheres while $X(\lambda_X)$ and $Y(\lambda_Y)$ are $\mathbb Z$-homology $S^1 \times S^2$'s. 

Let $K$ be a knot in a $\mathbb Z$-homology $3$-sphere with exterior $M_K$. The {\it longitude} $\lambda_K$ of $K$ is the longitude of $M_K$. The {\it meridian} $\mu_K$ of $K$ is the longitude of the tubluar neighbourhood $\overline{W \setminus M_K}$ of $K$. The pair $\mu_K, \lambda_K$ forms a basis for $H_1(\partial M_K)$. 

\begin{lemma} \label{horizontal}
Suppose that $T$ is a torus in a $\mathbb Z$-homology $3$-sphere $W$ and let $X, Y$ be the components of $W$ cut open along $T$. Suppose that $Y = P \cup Y_0$ where $P \cap Y_0 = \partial P \setminus T$ and $P$ is a Seifert manifold or than $S^1 \times D^2$ and $S^1 \times S^1 \times I$. Then 

$(1)$ the underlying space $B$ of the base orbifold of $P$ is planar, hyperbolic, and the multiplicities of the exceptional fibres in $P$ are pairwise coprime; 

$(2)$ $P$ has a unique Seifert structure;

$(3)$ if $\phi$ is the $P$-fibre slope on $T$ and $P$ has an exceptional fibre, then $\phi \not \in \{\lambda_X, \lambda_Y\}$.
\end{lemma}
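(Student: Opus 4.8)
The plan is to read off all three statements from the first homology of $P$ and the way $P$ sits inside $W$. Since $W$ is a $\mathbb{Z}$-homology $3$-sphere, the dual graph of its torus decomposition is a tree, so each boundary torus of $P$ separates $W$. Writing $\partial P = T_0 \cup T_1 \cup \cdots \cup T_n$ with $T_0 = T$, the torus $T_0$ then bounds the $\mathbb{Z}$-homology solid torus $X$, and each $T_i$ ($i \geq 1$) bounds a $\mathbb{Z}$-homology solid torus $Z_i$, with $Y_0 = Z_1 \sqcup \cdots \sqcup Z_n$. The key homological reduction I would establish is that gluing a $\mathbb{Z}$-homology solid torus $Z$ to $P$ along a torus has the same effect on first homology as an honest Dehn filling along the longitude $\lambda$ of $Z$, since $H_*(Z)\cong H_*(S^1\times D^2)$ with $\lambda$ in the role of the meridian. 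This gives $H_1(Y)\cong H_1(P)/\langle\lambda_1,\dots,\lambda_n\rangle$ and $H_1(W)\cong H_1(P)/\langle\lambda_0,\dots,\lambda_n\rangle$, where $\lambda_0=\lambda_X$ and each $\lambda_i$, in the basis (section $s_i$, fibre $f$) of $H_1(T_i)$, is $\lambda_i = p_i s_i + q_i f$ with $\gcd(p_i,q_i)=1$.

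First I would record the standard presentation of $H_1(P)$ from the Seifert data over a base surface of genus $g$ (or with crosscaps) and exceptional fibres $(\alpha_j,\beta_j)$: generators are the fibre $f$, the sections $s_0,\dots,s_n$, loops $c_1,\dots,c_k$ around the exceptional fibres, and the genus/crosscap classes, subject to $\sum_i s_i + \sum_j c_j = 0$ and $\alpha_j c_j + \beta_j f = 0$ (a crosscap also forces $2f=0$). Because the filling relations $\lambda_i = p_i s_i + q_i f$ involve only the $s_i$ and $f$, they cannot touch the genus or crosscap generators; hence positive genus (resp.\ a crosscap) would leave nontrivial free homology in the quotient $H_1(W)=H_1(P)/\langle\lambda_0,\dots,\lambda_n\rangle$, contradicting $H_1(W)=0$. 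This forces $B$ to be planar. With $g=0$, the manifold $W$ becomes homologically the Seifert fibered space over $S^2$ got by filling all the $T_i$, so $|H_1(W)|$ equals the familiar $\big|\prod(\text{mult})\cdot e_W\big|$ with $e_W$ the rational Euler number; setting this to $1$ and noting that a prime dividing two of the multiplicities $\{\alpha_j\}\cup\{|p_i|\}$ would divide every term of this integer yields pairwise coprimality of the multiplicities, in particular of the $\alpha_j$.

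For hyperbolicity I would simply enumerate the planar orbifolds with $\chi^{\mathrm{orb}}\geq 0$: up to the coprimality just proved these are only the disk with at most one cone point, the annulus, and $D^2(2,2)$. The first two give $P\cong S^1\times D^2$ or $T^2\times I$, excluded by hypothesis, while $D^2(2,2)$ is excluded because $(2,2)$ is not coprime. Hence $\chi^{\mathrm{orb}}(B)<0$ and $B$ is hyperbolic, completing $(1)$. Statement $(2)$ is then immediate from the classical uniqueness of the Seifert fibration on an orientable Seifert manifold with hyperbolic base orbifold.

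For $(3)$ I would compute the rational Euler number $e_Y = \sum_{i=1}^n q_i/p_i + \sum_{j=1}^k \beta_j/\alpha_j$ controlling $H_1(Y;\mathbb{Q})$: the relations force every generator to be a $\mathbb{Q}$-multiple of $f$, so $f$ generates $H_1(Y;\mathbb{Q})\cong\mathbb{Q}$ and there $s_0 = e_Y f$. Writing $e_Y=u/v$ in lowest terms gives $\lambda_Y = v s_0 - u f$ with $v\geq 1$; since $\phi$ is the fibre slope $f=(0,1)$, it is never parallel to $\lambda_Y$, so $\phi\neq\lambda_Y$ unconditionally. If instead $\phi=\lambda_X$, then (using that $\{\lambda_X,\lambda_Y\}$ is a basis of $H_1(T_0)$, as $X(\lambda_Y)$ and $Y(\lambda_X)$ are $\mathbb{Z}$-homology spheres) the geometric intersection number satisfies $1=\Delta(\lambda_X,\lambda_Y)=\Delta(f,\lambda_Y)=v$, i.e.\ $e_Y\in\mathbb{Z}$. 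The final step, which I expect to be the crux, is a $d$-adic valuation argument: when $P$ has an exceptional fibre, choose a prime $d\mid\alpha_{j_0}$; pairwise coprimality forces $d\nmid\alpha_j$ for $j\neq j_0$ and $d\nmid p_i$, so $\beta_{j_0}/\alpha_{j_0}$ is the unique term of negative $d$-adic valuation and $v_d(e_Y)=-v_d(\alpha_{j_0})<0$. Thus $e_Y\notin\mathbb{Z}$, contradicting $v=1$, and therefore $\phi\neq\lambda_X$. The main obstacles throughout are the careful bookkeeping of the homology presentation and the justification of the homological Dehn-filling reduction for genuine $\mathbb{Z}$-homology solid tori; once those are secured, the valuation computation delivers $(3)$.
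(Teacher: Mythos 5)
Your parts (1) and (2) are correct and are essentially a hands-on homological version of what the paper does in one line: the paper observes that a non-orientable base, positive genus, or non-coprime multiplicities would produce a degree one map from $W$ to a manifold with nontrivial $H_1$ --- geometrically this is exactly your reduction, pinching each complementary $\mathbb{Z}$-homology solid torus to an honest solid torus along its longitude --- and (2) is quoted from the classification of Seifert structures over hyperbolic base orbifolds in both treatments. Your key reduction (gluing a $\mathbb{Z}$-homology solid torus affects $H_1$ exactly like Dehn filling along its longitude) is a correct Mayer--Vietoris fact, and your determinant/coprimality computation and the enumeration of non-hyperbolic planar orbifolds are fine. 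One small imprecision: in the crosscap case the surviving homology need not be \emph{free} --- since $2f=0$, everything can die rationally; the correct statement is that the relation $2f=0$ makes the presentation matrix singular mod $2$, so $H_1(W)$ retains nontrivial $2$-torsion. For (3), your route is genuinely different from the paper's, which is shorter and slicker: if $P$ has an exceptional fibre then $Y(\phi)$ has a lens space summand, hence torsion in $H_1$, whereas $H_1(Y(\lambda_X))\cong\{0\}$ and $H_1(Y(\lambda_Y))\cong\mathbb{Z}$ are torsion-free. Your Euler-number computation buys more explicit information (it identifies $\lambda_Y=\pm(vs_0-uf)$ in coordinates), at the cost of bookkeeping; the paper's argument needs no presentation at all.

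However, there is a genuine gap in your (3) as written: the assertion that $\phi\neq\lambda_Y$ holds \emph{unconditionally} is false, and the error is traceable to the formula $s_0=e_Yf$ with $e_Y=\sum_i q_i/p_i+\sum_j\beta_j/\alpha_j$, which silently assumes every $p_i\neq 0$, i.e.\ that no internal longitude $\lambda_i$ equals the fibre slope on $T_i$. If some $p_i=0$, then $f$ dies in $H_1(Y;\mathbb{Q})$ and one finds $\lambda_Y=\phi$. This configuration really occurs when $P$ has no exceptional fibre: the paper's proof of Theorem \ref{taut} devotes its entire second half to exactly the case $\lambda_Y=\phi_Y$ with $P=F\times S^1$, so an unconditional $\phi\neq\lambda_Y$ would contradict the paper itself. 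The lemma only claims (3) under the hypothesis that $P$ has an exceptional fibre, and under that hypothesis your argument is repairable inside your own framework: if $p_i=0$ for some $i$, the determinant computing $|H_1(W)|$ collapses to $\pm q_i\prod_j\alpha_j\prod_{i'\neq i}p_{i'}$, whose absolute value is at least $\alpha_{j_0}\geq 2$, contradicting $|H_1(W)|=1$. Hence all $p_i\neq 0$, $e_Y$ is well defined, $v\geq 1$ gives $\phi\neq\lambda_Y$, and your $d$-adic valuation argument (which is correct, given the coprimality of $\{\alpha_j\}\cup\{|p_i|\}$ from your part (1)) rules out $\phi=\lambda_X$. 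You should add this step and delete the word ``unconditionally.''
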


\begin{proof}
If $B$ is non-orientable, or is orientable of positive genus, or has two exceptional fibres whose multiplicities are not coprime, then $W$ admits a degree $1$ map to a manifold with non-trivial first homology group, which is impossible. Thus (1) holds. Assertion (2) is a consequence of (1) and the classification of Seifert structures on $3$-manifolds (cf. \cite[\S VI.16]{Ja}).  Finally observe that as $H_1(Y(\lambda_X)) \cong \{0\}$ and $H_1(Y(\lambda_Y)) \cong \mathbb Z$, neither $Y(\lambda_X)$ nor $Y(\lambda_Y)$ has a lens space summand. On the other hand, if $P$ has an exceptional fibre, then $Y(\phi)$ does have such a summand. This completes the proof. 
\end{proof}

\section{Horizontal foliations and strongly detected slopes in Seifert fibred $\mathbb Z$-homology solid tori} \label{horizontal and detected} 

The set $\mathcal{S}_{rat}(T)$ of (rational) slopes on a torus $T$ is naturally identified with the subset $P(H_1(T; \mathbb Q))$ of the projective space $\mathcal{S}(T) = P(H_1(T; \mathbb R)) \cong S^1$. We endow $\mathcal{S}_{rat}(T)$ with the induced topology as a subset of $\mathcal{S}(T)$. The projective class of an element $\alpha \in H_1(T; \mathbb R)$ will be denoted by $[\alpha]$, though we sometimes abuse notation and write $\alpha \in \mathcal{S}_{rat}(T)$ for a non-zero class $\alpha$ in $H_1(T)$. 

For a $3$-manifold $X$ whose boundary is a torus $T$, set $\mathcal{S}_{rat}(X) = \mathcal{S}_{rat}(T)$. 
We say that $[\alpha] \in \mathcal{S}_{rat}(X)$ is {\it strongly detected} by a taut foliation $\mathcal{F}$ on $X$ if $\mathcal{F}$ restricts on $T$ to a fibration of slope $[\alpha]$. In this case we call $[\alpha]$ the {\it slope of $\mathcal{F}$}. 

When $X$ is Seifert fibred and $T$ is a boundary component of $X$, we say that $[\alpha] \in \mathcal{S}_{rat}(X)$ is {\it horizontal} if it is not the fibre slope. 

\begin{lemma} \label{strongly rational not vertical}
Suppose that $\mathcal{F}$ is a co-oriented taut foliation on a $\mathbb Z$-homology $3$-sphere $W$.

$(1)$ If $\mathcal{F} \cap T$ is a fibration by simple closed curves for some boundary component $T$ of a piece $P$ of $W$, then the slope of $T$ represented by these curves is horizontal.  

$(2)$ If $\mathcal{F}$ is rational, then it is horizontal. 
\end{lemma}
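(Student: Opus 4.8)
The plan is to reduce everything to part $(1)$, using two inputs. The first is that a co-oriented taut foliation $\mathcal{F}$ on a $\mathbb{Z}$-homology $3$-sphere has no compact leaf: a closed leaf $S$ is two-sided and would be met coherently by a transverse loop $\gamma$, forcing the algebraic intersection $[\gamma]\cdot[S]$ to be a positive integer, which is impossible since $[S]\in H_2(W)=\{0\}$. The second is Brittenham's positioning theorem, which asserts that a taut foliation of a Seifert fibred space over a hyperbolic base orbifold is isotopic either to a \emph{horizontal} foliation (transverse to every fibre) or to a \emph{vertical} one (saturated by fibres). Granting part $(1)$, part $(2)$ is then quick: if $\mathcal{F}$ is rational, then for each JSJ torus $T$ the trace $\mathcal{F}\cap T$ is a $1$-dimensional foliation of $T$ all of whose leaves are circles, hence a fibration of some slope $s_T$. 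Applying part $(1)$ to each of the two pieces abutting $T$ shows that $s_T$ is not the fibre slope of either, so $\mathcal{F}$ meets the boundary of every piece $P$ transversally to the fibres; Brittenham's theorem together with the absence of vertical (in particular compact) leaves then forces $\mathcal{F}|_P$ to be horizontal.

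For part $(1)$ I would argue by contradiction: suppose $\mathcal{F}\cap T$ is a fibration whose slope is the fibre slope $\phi$ of the piece $P$ abutting $T$. By Lemma \ref{horizontal}$(1)$ the base orbifold $B$ of $P$ is hyperbolic, so Brittenham's theorem applies and $\mathcal{F}|_P$ is isotopic to a horizontal or to a vertical foliation. The detected slope is an invariant of the isotopy class of $\mathcal{F}|_P$: the restriction to $T$ of an ambient isotopy acts trivially on $H_1(T)$, hence preserves the class carried by the leaves of the trace. A horizontal foliation meets $T$ in curves transverse to the fibres, i.e.\ in a non-fibre slope, so the horizontal alternative is incompatible with the detected slope being $\phi$. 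Thus $\mathcal{F}|_P$ must be isotopic to a vertical foliation.

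It remains to exclude the vertical case, and this is where hyperbolicity of $B$ enters decisively. A vertical foliation is saturated by the Seifert fibres, so its tangent planes contain the fibre direction and it descends to a nonsingular $1$-dimensional foliation $\mathcal{G}$ of the base orbifold $B$. Since $\mathcal{F}$ has no compact leaf, $\mathcal{G}$ has no closed leaf and no boundary circle of $B$ is a leaf, so $\mathcal{G}$ is transverse to $\partial B$. But a hyperbolic $2$-orbifold admits no nonsingular line field: its orbifold Euler characteristic is negative, whereas a nonsingular line field transverse to $\partial B$ would force $\chi^{orb}(B)$ to vanish. (Consistently, a cone point of order $\geq 3$ already obstructs such a field directly.) This contradiction disposes of the vertical case and completes the proof of part $(1)$, hence of part $(2)$.

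The main obstacle is the correct use of the positioning theorem: one must know that the isotopy realizing the horizontal/vertical dichotomy respects the Seifert structure well enough that the boundary slope of the foliation is preserved, so that detecting the fibre slope genuinely rules out the horizontal alternative. Once this is secured, the orbifold Euler-characteristic obstruction to a vertical foliation is routine, provided one is careful to invoke the no-compact-leaf property to guarantee that the induced base foliation $\mathcal{G}$ is transverse to $\partial B$ rather than having boundary tori as leaves.
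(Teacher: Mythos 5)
Your reduction of $(2)$ to $(1)$ via the no-compact-leaf property is sound (the coherence of the crossings of a transverse loop with a compact leaf does follow from the co-orientation, so your homological argument is correct, and it matches the paper's appeal to Goodman), but your proof of $(1)$ rests on a misquotation of Brittenham's theorem, and this is a genuine gap. For a foliation of a Seifert piece over a hyperbolic base orbifold the alternative is \emph{not} ``isotopic to a horizontal foliation or to a vertical foliation'': there are taut foliations that are neither, e.g.\ a vertical torus leaf with nearby leaves spiralling onto it and horizontal elsewhere, and exactly such mixed foliations occur in graph manifolds. The correct statement, which is what the paper extracts from the proof of Proposition 3 of Brittenham, is: either $\mathcal{F}|_P$ can be isotoped to be horizontal, or it contains a \emph{vertical sublamination}. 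In the contradiction setup of $(1)$, where $\mathcal{F}\cap T$ is the fibration by fibres, that vertical sublamination may consist of vertical annuli meeting $T$ in fibres, or of a vertical torus with nonvertical leaves limiting on it --- configurations your orbifold Euler-characteristic computation, which only applies to a \emph{globally} vertical foliation, does not touch. Two further problems compound this. First, in $(1)$ you control $\mathcal{F}$ only on the single component $T$; on the other components of $\partial P$ the foliation need not even be transverse, so a positioning theorem (proved for laminations properly embedded with vertical or horizontal boundary behaviour) cannot be applied to $\mathcal{F}|_P$ at all. Second, your step ``no boundary circle of $B$ is a leaf, so $\mathcal{G}$ is transverse to $\partial B$'' is a non sequitur: a surface foliation can meet the boundary in isolated half-saddle tangencies (the chord foliation of the disk is the model), and concave tangencies destroy the Poincar\'e--Hopf forcing of $\chi^{orb}(B)=0$. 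Transversality is automatic only along the distinguished torus $T$, where the trace is the full fibration.

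The paper proves $(1)$ by a different and more robust mechanism that sidesteps all of this: if the trace of $\mathcal{F}$ on $T$ were the fibre slope $\phi$, one performs the $(T,\phi)$-Dehn filling of $P$ and extends the foliation across the filling solid torus by meridian disks, obtaining a co-oriented taut foliation on $P(\phi)$; but since $P$ is a JSJ piece (so neither a solid torus nor $S^1\times S^1\times I$) with planar base and pairwise coprime multiplicities (Lemma \ref{horizontal}), $P(\phi)$ is a nontrivial connected sum containing lens space summands, contradicting the primeness of taut-foliated manifolds. Note that this argument uses only the one boundary component $T$, which is precisely why it succeeds where your restriction-and-position argument cannot. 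Once $(1)$ is in hand and Brittenham is quoted correctly, your part $(2)$ becomes the paper's argument: all boundary traces are horizontal fibrations, a vertical sublamination would then have to be a vertical torus leaf (vertical annuli being excluded by the boundary slopes), i.e.\ a compact leaf, contradicting the property you established at the outset.
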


\begin{proof}
Suppose that $\mathcal{F} \cap T$ is a fibration by simple closed curves of vertical slope $\phi$ and let $P'$ be the manifold obtained by the $(T, \phi)$-Dehn filling $P$. Since $P$ has base orbifold of the form $B(a_1, \ldots, a_n)$ for a planar surface $B$ (Lemma \ref{horizontal}), $P'$ is homeomorphic to $(\#_{i = 1}^{n} L_{a_i}) \# (\#_{j= 1}^{r-1} S^1 \times D^2)$ where $r = |\partial P| - 1$. On the other hand, $\mathcal{F}$ extends to a co-oriented taut foliation $\mathcal{F}'$ on $P'$ and so $P'$ is either prime or $S^2 \times I$ (see e.g. \cite[Corollary 9.1.9]{CC2}). As the latter case does not arise, we have $n + (r-1) \leq 1$. Thus $P$ is either a solid torus or $S^1 \times S^1 \times I$, which is impossible for a piece of $W$. Thus part (1) the lemma holds. 

Next suppose that $\mathcal{F}$ is rational and let $P$ be a piece of $W$. By part (1), for each boundary component $T$ of $P$, $\mathcal{F} \cap T$ is a fibration by simple closed horizontal curves. Since the base orbifold of $P$ is planar (Lemma \ref{horizontal}), we can now argue as in the proof of \cite[Proposition 3]{Br1} to see that if $\mathcal{F}$ is not horizontal in $P$, it contains a vertical, separating leaf homeomorphic to a torus. This is impossible as it contradicts the assumption that $\mathcal{F}$ is co-oriented and taut (\cite[Proposition 2.1]{Go}). Thus part (2) holds. 
\end{proof}

Here is a special case of our main theorem.  

\begin{prop} \label{seifert case}
Let $W$ be a Seifert fibred $\mathbb Z$-homology $3$-sphere. Then the following conditions are equivalent:

$(a)$ $\pi_1(W)$ is left-orderable. 

$(b)$ $W$ is not an L-space.

$(c)$ $W$ admits a co-oriented horizontal foliation. 

Further, $W$ satisfies these conditions if and only if it is neither $S^3$ nor $\Sigma(2,3,5)$. 

\end{prop}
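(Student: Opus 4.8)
The plan is to reduce all three equivalences to the single existence question for horizontal foliations, which is governed by the work of Eisenbud--Hirsch--Neumann \cite{EHN}, Jankins--Neumann \cite{JN} and Naimi \cite{Na}. First I would pin down the structure of a closed Seifert fibred $\mathbb{Z}$-homology $3$-sphere: a standard homological computation shows that its base orbifold is $S^2(a_1, \ldots, a_n)$ with the multiplicities $a_i$ pairwise coprime, and that the condition $H_1(W) \cong \{0\}$ determines the Seifert (Euler) invariant up to the usual normalization. Thus $W$ is a Brieskorn sphere $\Sigma(a_1, \ldots, a_n)$, and $\pi_1(W)$ is finite precisely when the base orbifold is spherical, i.e. when $W \in \{S^3, \Sigma(2,3,5)\}$, and is infinite otherwise.

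The heart of the argument is the following claim, whose verification is the main obstacle: \emph{$W$ admits a co-oriented horizontal foliation if and only if $W$ is neither $S^3$ nor $\Sigma(2,3,5)$.} The ``only if'' direction is immediate, since $S^3$ and $\Sigma(2,3,5)$, having finite fundamental group, are L-spaces, so by \cite[Theorem 1.4]{OS1} they carry no co-oriented taut foliation, whereas a horizontal foliation on a $\mathbb{Z}$-homology sphere is co-oriented and taut. For the ``if'' direction I would invoke the Jankins--Neumann--Naimi realizability criterion, which expresses the existence of a horizontal foliation on a Seifert space over $S^2$ as a numerical condition on the Euler number $e$ and the normalized Seifert invariants $\beta_i/\alpha_i$. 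The point to check is that this inequality holds for the specific Euler numbers attached to the Brieskorn $\mathbb{Z}$-homology spheres with infinite fundamental group; concretely, the criterion fails only in the non-hyperbolic (finite $\pi_1$) cases, and among pairwise coprime multiplicities with $n \ge 3$ the base orbifold $S^2(a_1, \ldots, a_n)$ is non-hyperbolic exactly for $(a_1,a_2,a_3) = (2,3,5)$, the trivial case $n \le 2$ giving $S^3$. This numerical verification, isolating $(2,3,5)$ as the unique nontrivial exception, is the computational core of the proposition.

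With the claim in hand the equivalences assemble as follows. The implication $(c) \Rightarrow (b)$ is \cite[Theorem 1.4]{OS1}, since a horizontal foliation is co-oriented and taut, while $(b) \Rightarrow$ ``$W \notin \{S^3, \Sigma(2,3,5)\}$'' holds because those two manifolds are L-spaces; combined with the claim (which gives ``$W \notin \{S^3, \Sigma(2,3,5)\}$'' $\Rightarrow (c)$) this closes the loop $(b) \Leftrightarrow (c) \Leftrightarrow W \notin \{S^3, \Sigma(2,3,5)\}$. For left-orderability I would run the argument of Corollary \ref{consequences}$(3)$: the horizontal, hence taut, foliation produced by the claim feeds into Lemma \ref{minimal} to yield a minimal homomorphism $\pi_1(W) \to \mathrm{Homeo}_+(S^1)$, which lifts, because $H^2(W) \cong \{0\}$, to a nontrivial homomorphism into $\mathrm{Homeo}_+(\mathbb{R})$, so that $\pi_1(W)$ is left-orderable; this gives $(c) \Rightarrow (a)$. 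Finally $(a) \Rightarrow$ ``$W \notin \{S^3, \Sigma(2,3,5)\}$'' because the fundamental groups of these two manifolds (the trivial group and the binary icosahedral group) are finite and hence not left-orderable, by convention respectively by the presence of torsion. Together these implications give $(a) \Leftrightarrow (b) \Leftrightarrow (c)$ together with the stated characterization of the exceptional manifolds.
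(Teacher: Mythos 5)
Your overall architecture closes the logical loop correctly, but it is a genuinely different route from the paper's. You propose to prove (c) directly---verifying the Eisenbud--Hirsch--Neumann/Jankins--Neumann/Naimi numerical criterion for the Seifert invariants of a $\mathbb{Z}$-homology sphere with hyperbolic base orbifold---and then derive (a) from (c) via Lemma \ref{minimal} together with the lifting argument, and (b) from (c) via \cite[Theorem 1.4]{OS1}. The paper goes the other way around: it quotes the equivalences (a) $\Leftrightarrow$ (c) from \cite{BRW} and (b) $\Leftrightarrow$ (c) from \cite{LS} (see also \cite{CM}) as black boxes, and then proves the characterization by establishing (a) directly and softly: a Euclidean base orbifold is ruled out because each of $S^2(2,3,6)$, $S^2(2,4,4)$, $S^2(3,3,3)$, $S^2(2,2,2,2)$ has $H_1 \neq \{0\}$, contradicting $H_1(W) = \{0\}$; so away from the two exceptions $\mathcal{B}$ is hyperbolic, a discrete faithful representation $\pi_1(\mathcal{B}) \to PSL_2(\mathbb{R})$ gives a nontrivial homomorphism $\pi_1(W) \to PSL_2(\mathbb{R})$, and $H^2(W) = \{0\}$ lets it lift to $\hbox{Homeo}_+(\mathbb{R})$, whence left-orderability by \cite[Theorem 1.1(1)]{BRW}. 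The paper's route thus avoids both the numerical verification and the universal-circle machinery; yours avoids treating the BRW and LS equivalences as black boxes, at the cost of having to do the computation. (Note also that for (c) $\Rightarrow$ (a) you do not need the full strength of Lemma \ref{minimal}: a horizontal foliation is $\mathbb{R}$-covered, so $\pi_1(W)$ acts nontrivially on the leaf space $\mathbb{R}$ and \cite[Theorem 1.1(1)]{BRW} applies directly, as the paper itself remarks in the proof of Corollary \ref{consequences}.)

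The one substantive hole is the step you yourself call ``the computational core'': the assertion that the Jankins--Neumann--Naimi criterion, applied to the Euler number $e = \pm 1/(a_1 \cdots a_n)$ and normalized invariants of a Brieskorn sphere, fails only when the base orbifold is non-hyperbolic. This is true, but it is not a soft fact---for general Seifert fibred rational homology spheres the analogous statement is false (there are many Seifert L-spaces over hyperbolic base orbifolds), so the verification genuinely uses the homology-sphere constraints on $e$ and on the $\gamma_i = b_i/a_i$, and it is a real rotation-number computation; it is precisely the content of \cite{LS} and \cite{CM}, which the paper cites rather than reproves. As written, your proposal asserts the inequality check instead of performing it, so to be complete you must either carry out that computation (in the style of conditions (1)--(3) in the paper's Proposition \ref{strongly detected slopes}, or the case analyses in Propositions \ref{limit} and \ref{limit235}) or cite \cite{LS} and \cite{CM} at that point, which is in effect what the paper does.
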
 

\begin{proof}
Lemma \ref{horizontal} implies that the base orbifold $\mathcal{B}$ of $W$ has underlying space $S^2$. In this case the equivalence of (a) and (c) was established in \cite{BRW}, while those of (b) and (c) was established in \cite{LS} (see also \cite{CM}). 

Next suppose that $W$ is either $S^3$ or $\Sigma(2,3,5)$. Then the fundamental group of $W$ is finite so its fundamental group is not left-orderable, $W$ is an L-space \cite[Proposition 2.3]{OS4} and therefore it does not admit a co-oriented horizontal foliation \cite[Theorem 1.4]{OS1}. 

Conversely suppose that $W \ne S^3, \Sigma(2,3,5)$. Equivalently, $\chi(\mathcal{B}) \leq 0$. If $\chi(\mathcal{B}) = 0$, $\mathcal{B}$ would support a Euclidean structure and would therefore be one of $S^2(2,3,6), S^2(2,4,4), S^2(3,3,3)$ or $S^2(2,2,2,2)$. But then $H_1(\mathcal{B}) \ne \{0\}$ contrary to the fact that $H_1(W) = \{0\}$. Thus $\chi(\mathcal{B}) < 0$, so $\mathcal{B}$ is hyperbolic. It follows that there is a discrete faithful representation $\pi_1(\mathcal{B}) \to PSL_2(\mathbb R)$ and therefore a non-trivial homomorphism $\pi_1(W) \to PSL_2(\mathbb R)$. As $H^2(W) = \{0\}$, this homomorphism factors through $\widetilde{SL_2} \leq \widetilde{Homeo}_+(S^1) \leq \hbox{Homeo}_+(\mathbb R)$. Hence $\pi_1(W)$ is left-orderable (cf. \cite[Theorem 1.1(1)]{BRW}). It follows from the first paragraph of the proof that $W$ is not an L-space and it admits a co-oriented horizontal foliation. 
\end{proof}

Let $X$ be a Seifert fibered $\mathbb Z$-homology solid torus and set 
$$\mathcal{D}_{rat}^{str}(X) = \{ [\alpha] \in \mathcal{S}_{rat}(X) : [\alpha] \hbox{ is strongly detected by a rational foliation on $X$}\}$$
Clearly $\mathcal{D}_{rat}^{str}(X)$ coincides with the set of slopes $\alpha$ on $\partial X$ such that $X(\alpha)$ admits a horizontal foliation (cf. Lemma \ref{strongly rational not vertical}). The work of a number of people (\cite{EHN}, \cite{JN}, \cite{Na}) shows that the latter set is completely determined by the Seifert invariants of $X(\alpha)$.  In particular, we have the following result. 

\begin{prop} \label{strongly detected slopes}
Let $X$ be a Seifert manifold which is a $\mathbb Z$-homology solid torus with incompressible boundary. Then there is a connected open proper subset $U$ of $\mathcal{S}(X)$ such that 

$(1)$ $\mathcal{D}_{rat}^{str}(X) = U \cap \mathcal{S}_{rat}(X)$. 

$(2)$ If $X$ is not contained in $S^3$ and $\Sigma(2,3,5)$, then $U$ contains all the slopes $\alpha$ on $\partial X$ such that $X(\alpha)$ is a $\mathbb Z$-homology $3$-sphere.
\end{prop}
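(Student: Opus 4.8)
```latex
\textbf{Proof proposal.}
The plan is to reduce the statement to the known combinatorial description of horizontal foliations on Seifert fibered spaces coming from \cite{EHN}, \cite{JN}, \cite{Na}. Recall that the set of slopes $\alpha$ for which the filled manifold $X(\alpha)$ carries a horizontal foliation is governed entirely by the Seifert invariants of $X(\alpha)$, and by the preceding remark $\mathcal{D}_{rat}^{str}(X)$ coincides with this set. First I would set up coordinates: since $X$ is a Seifert fibered $\mathbb Z$-homology solid torus with incompressible boundary, by Lemma \ref{horizontal} its base orbifold $B(a_1,\dots,a_n)$ is planar and hyperbolic with pairwise coprime multiplicities, and $X$ has a \emph{unique} Seifert structure. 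Fixing the section and fibre as a basis for $H_1(\partial X;\mathbb Q)$, each filling slope $\alpha$ on $\partial X$ extends the Seifert structure (except at the fibre slope $\phi$, which by Lemma \ref{horizontal}(3) is excluded since $X$ has an exceptional fibre), and the Euler number $e(X(\alpha))$ is an explicit affine function of $\alpha$ in these coordinates. The Jankins--Neumann--Naimi criterion then states that $X(\alpha)$ admits a horizontal foliation precisely when $e(X(\alpha))$ lies in an interval determined by the realizable fillings, and I would translate this into an open condition on $[\alpha] \in \mathcal{S}(X) \cong S^1$.

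With this translation in hand, the definition of $U$ is forced: let $U$ be the set of projective classes $[\alpha] \in \mathcal{S}(X)$ for which the (real-extended) Euler number satisfies the Jankins--Neumann--Naimi inequality. I would then verify the three properties of $U$ separately. \emph{Openness and connectedness} follow because the admissibility inequality cuts out an open interval of Euler numbers, and the affine (hence monotone) dependence of $e(X(\alpha))$ on $[\alpha]$ pulls this back to a single connected open arc in $S^1$; I must check that this arc is a \emph{proper} subset, which holds because the fibre slope $\phi$ and at least one other slope fail the inequality (the criterion never allows all slopes, as that would force a horizontal foliation on $X$ with all fillings, contradicting the existence of an exceptional fibre). \emph{Part (1)}, namely $\mathcal{D}_{rat}^{str}(X) = U \cap \mathcal{S}_{rat}(X)$, is then immediate from the equivalence between horizontal foliations on $X(\alpha)$ and strong detection recorded before the proposition, intersected with the rational slopes.

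For \emph{part (2)}, I would argue that the $\mathbb Z$-homology-sphere filling slopes land inside $U$. If $X(\alpha)$ is a $\mathbb Z$-homology $3$-sphere, then $X(\alpha)$ is itself a Seifert $\mathbb Z$-homology $3$-sphere over the orbifold $B$ capped off, so by Proposition \ref{seifert case} it admits a horizontal foliation \emph{unless} it is $S^3$ or $\Sigma(2,3,5)$. The hypothesis that $X$ is not contained in $S^3$ or $\Sigma(2,3,5)$ rules out exactly those exceptional fillings: were some $\mathbb Z$-homology-sphere filling equal to $S^3$ or $\Sigma(2,3,5)$, then $X$ would embed in one of these manifolds. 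Hence every such $\alpha$ satisfies the admissibility inequality and lies in $U$. The main obstacle I expect is the bookkeeping in the first step: one must carefully identify the Jankins--Neumann--Naimi realizability interval for Euler numbers with an honest open arc in the projective slope space $S^1$, keeping track of orientation conventions and verifying that the endpoints of the interval correspond to genuinely non-foliated fillings so that $U$ is proper but still contains the homology-sphere slopes. This is where the coprimality of the multiplicities from Lemma \ref{horizontal}(1) will be essential, since it controls which rational Euler numbers are realized by the exceptional fibres.
```
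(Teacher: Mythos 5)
Your skeleton is the paper's: reduce to the Eisenbud--Hirsch--Neumann/Jankins--Neumann/Naimi classification of horizontal foliations on the fillings $X(\alpha)$ (using Lemma \ref{horizontal} to normalize the Seifert data), define $U$ as the resulting arc of admissible slopes, and get part (2) from Proposition \ref{seifert case} together with the observation that an $S^3$- or $\Sigma(2,3,5)$-filling would embed $X$ in one of those manifolds --- that last deduction is exactly the paper's. But there is a genuine gap at the central step of part (1). You assert that ``the Jankins--Neumann--Naimi criterion then states that $X(\alpha)$ admits a horizontal foliation precisely when $e(X(\alpha))$ lies in an interval.'' It states no such thing. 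Writing $X(\alpha)$ with normalized invariants $(0; -\lfloor \frac{a}{b}\rfloor; \gamma_1, \ldots, \gamma_n, \{\frac{a}{b}\})$, the criterion is a family of arithmetic conditions: either $1-n < \frac{a}{b} < -1$, or $\lfloor \frac{a}{b}\rfloor = -1$ (respectively $\lceil \frac{a}{b}\rceil = 1-n$) and there exist coprime integers $0 < A < M$ and a permutation of $(\frac{A}{M}, \frac{M-A}{M}, \frac{1}{M}, \ldots , \frac{1}{M})$ (respectively of $(\frac{A}{M}, \frac{M-A}{M}, \frac{M-1}{M}, \ldots , \frac{M-1}{M})$) dominating (respectively dominated by) the $\gamma_i$ and $\{\frac{a}{b}\}$. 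Since the $\gamma_i$ are fixed, parametrizing by $\frac{a}{b}$ or by the Euler number is indeed harmless; the problem is that a priori these conditions cut out only a \emph{set} of rationals, and showing that this set is exactly the rational points of a single \emph{open} interval is the substantive content of (1), not bookkeeping or orientation conventions. The paper handles it by taking $V$ to be the convex hull of the admissible $\frac{a}{b}$ and appealing to a monotonicity statement (\cite[Proposition A.4]{BC}) to show that $V$ is an open interval all of whose rational points are detected --- and this is precisely where the homology hypothesis enters: the interval conclusion \emph{fails} when $n = 2$ and $\gamma_1 + \gamma_2 = 1$, a case excluded because the multiplicities of a Seifert $\mathbb Z$-homology solid torus are coprime (if $\gamma_1 + \gamma_2 = 1$ then $a_1 \mid b_1 a_2$, so $a_1 \mid b_1$, contradicting $0 < b_1 < a_1$). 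You invoke the coprimality of the $a_i$ but assign it a different and vague role (``controls which rational Euler numbers are realized by the exceptional fibres''), and you never confront this exceptional case; your sentence ``the admissibility inequality cuts out an open interval of Euler numbers'' simply assumes the point at issue.

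Two smaller remarks. Your properness mechanism is more elaborate than needed: the three conditions confine $\frac{a}{b}$ to a bounded range (roughly between $-n$ and $0$), so $V \subsetneq \mathbb R$ and hence $U \subsetneq \mathcal{S}(X)$ at once; also, openness of $U$ (the endpoints of $V$, even when rational, are not detected) is a separate point from properness, and you run the two together. Part (2) as you state it is correct and coincides with the paper's one-line deduction from Proposition \ref{seifert case}, so once the interval structure of the detected set is properly established --- say by proving the monotonicity of the conditions above in $\{\frac{a}{b}\}$ for fixed $\gamma_1, \ldots, \gamma_n$ --- your argument closes up into the paper's proof.
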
 

\begin{proof}
The base orbifold of $X$ is of the form $D^2(a_1, a_2, \ldots , a_n)$ where $n$ and each $a_i$ are at least $2$. Since $X$ is a $\mathbb Z$-homology solid torus, the $a_i$ are pairwise coprime.  We can assume that the Seifert invariants $(a_1, b_1), \ldots (a_n, b_n)$ satisfy $0 < b_i < a_i$ for each $i$. Then 
$$\pi_1(X) = \langle y_1, y_2, \ldots , y_n, h : h \hbox{ central, } y_1^{a_1} = h^{b_1}, y_2^{a_2} = h^{b_2}, \ldots , y_n^{a_n} = h^{b_n} \rangle  $$
Further,    
$$h^* = y_1y_2\ldots y_n$$ 
is a peripheral element of $\pi_1(X)$ {\it dual} to $h$. That is, $H_1(\partial X) = \pi_1(\partial X)$ is generated by $h$ and $h^*$.  

Set $\gamma_i = \frac{b_i}{a_i}$. If $\alpha = ah + bh^*$ is a slope on $\partial X$, then $X(\alpha)$ has Seifert invariants $(0; 0; \gamma_1, \ldots, \gamma_n, \frac{a}{b})$ and therefore also $(0; -\lfloor \frac{a}{b}\rfloor; \gamma_1, \ldots, \gamma_n, \{\frac{a}{b}\})$ where $\{\frac{a}{b}\} = \frac{a}{b} - \lfloor \frac{a}{b}\rfloor$. 
According to \cite{EHN}, \cite{JN}, \cite{Na}, $X(\alpha)$ admits a horizontal foliation if and only if one of the following conditions holds:
\begin{enumerate}

\vspace{-.2cm} \item $1 - n < \frac{a}{b} < -1$;

\vspace{.2cm} \item $\lfloor \frac{a}{b}\rfloor = -1$ and there are coprime integers $0 < A < M$ and some permutation $(\frac{A_1}{M}, \frac{A_2}{M}, \ldots , \frac{A_{n+1}}{M})$ of $(\frac{A}{M}, \frac{M-A}{M}, \frac{1}{M}, \ldots , \frac{1}{M})$ such that 
$\gamma_i < \frac{A_i}{M} \hbox{ for } 1 \leq i \leq n$ and  $\{\frac{a}{b}\} < \frac{A_{n+1}}{M}$; 

\vspace{.2cm} \item $\lceil \frac{a}{b}\rceil = 1-n$ and there are coprime integers $0 < A < M$ and some permutation $(\frac{A_1}{M}, \frac{A_2}{M}, \ldots , \frac{A_{n+1}}{M})$ of $(\frac{A}{M}, \frac{M - A}{M}, \frac{M-1}{M}, \ldots , \frac{M-1}{M})$ such that 
$\gamma_i > \frac{A_i}{M} \hbox{ for } 1 \leq i \leq n$ and  $\{\frac{a}{b}\} > \frac{A_{n+1}}{M}$. 

\end{enumerate}
Let $V \subset \mathbb R$ be the convex hull of the set of rationals $\frac{a}{b}$ determined these three conditions. We leave it to the reader to verify that $V$ is an open interval if and only if $n > 2$ or $n = 2$ and $\gamma_1 + \gamma_2 \ne 1$ (cf. \cite[Proposition A.4]{BC}). On the other hand, our hypothesis that $X$ is a $\mathbb Z$-homology solid torus rules out the possibility that $n = 2$ and $\gamma_1 + \gamma_2 = 1$. Thus if $U$ is the connected proper subset of $\mathcal{S}(X)$ corresponding to $V$ under the identification $\frac{a}{b} \leftrightarrow [ah + bh^*]$, then $U$ is open and $\mathcal{D}_{rat}^{str}(X) = U \cap \mathcal{S}_{rat}(X)$, which proves (1). Part (2) then follows from Proposition \ref{seifert case}.
\end{proof}

The case when $X$ is contained in $S^3$ or $\Sigma(2,3,5)$ is dealt with in the following two propositions.

\begin{prop} \label{limit}
Let $X$ be a $(p, q)$ torus knot exterior where $p, q \geq 2$ and fix a meridian-longitude pair $\mu, \lambda$ for $X$ such that the Seifert fibre of $X$ has slope $pq \mu + \lambda$. Identify the non-meridional slopes on $\partial X$ with $\mathbb Q$ in the usual way: $m \mu + n \lambda \leftrightarrow \frac{m}{n}$. Then there is a co-oriented horizontal foliation of slope $r \in \mathbb Q$ in $X$ if and only if $r < pq - (p+q)$. In particular, the result holds for each $r < 1$. 
\end{prop}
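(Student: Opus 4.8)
The plan is to translate the problem, via Dehn filling, into the Seifert-fibred framework of Proposition~\ref{strongly detected slopes} and then to read off the answer from the Eisenbud--Hirsch--Neumann--Jankins--Neumann--Naimi conditions tabulated in its proof, the only genuine work being to convert between the meridian--longitude coordinates and the natural Seifert coordinates. First I would record that $X$ is Seifert fibred with base orbifold $D^2(p,q)$ (with $\gcd(p,q)=1$), is a $\mathbb Z$-homology solid torus with incompressible boundary, and is contained in $S^3$. By the remark preceding Proposition~\ref{strongly detected slopes} together with Lemma~\ref{strongly rational not vertical}, a co-oriented horizontal foliation of slope $r$ exists in $X$ if and only if $X(r)$ admits a horizontal foliation; and by Proposition~\ref{strongly detected slopes}(1) the set of such $r$ is $U\cap\mathcal S_{rat}(X)$ for a single connected open interval $U$, which avoids the fibre slope (vertical slopes are not strongly detected) and the meridian (since $X(\mu)=S^3$ admits no taut foliation). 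It therefore suffices to locate the two endpoints of $U$.

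Next I would fix the dictionary between the slope $r=m/n$ (in the basis $\mu,\lambda$) and the parameter $a/b$ of the third Seifert invariant used in the proof of Proposition~\ref{strongly detected slopes}. Writing the Seifert invariants of $X$ as $(p,b_1),(q,b_2)$ with $0<b_1<p$ and $0<b_2<q$, I would pin down the dual class $h^{*}=c\mu+d\lambda$ from the homological normalisations $X(\mu)=S^3$ and $X(\lambda)=$ a $\mathbb Z$-homology $S^1\times S^2$: these force $c=qb_1+pb_2=pq-1$ and $d=1$ for the orientation compatible with the fibre slope $h=pq\mu+\lambda$. Setting $\gamma_1=b_1/p$ and $\gamma_2=b_2/q$, so that $\gamma_1+\gamma_2=(pq-1)/pq<1$, and solving $m\mu+n\lambda=a\,h+b\,h^{*}$ yields the M\"obius change of variables $a/b=(r-c)/(pq-r)$. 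This is an increasing bijection of $(-\infty,pq)$ onto $(-1,\infty)$ which sends the meridian to $a/b=-1$, the fibre slope to $a/b=\infty$, and $r=pq-(p+q)$ to $a/b=-1+\tfrac1{p+q}$.

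With $n=2$ the conditions (1)--(3) in the proof of Proposition~\ref{strongly detected slopes} simplify: condition (1) becomes the vacuous constraint $-1<a/b<-1$, and because $U$ is a single interval lying in $(-1,\infty)$ and avoiding $a/b=-1$ (by the first paragraph), condition (3), whose regime is $a/b\le -1$, contributes nothing. For condition (2) one has $a/b\in(-1,0)$, and the triple $(A/M,(M-A)/M,1/M)$ must strictly dominate $(\gamma_1,\gamma_2,\{a/b\})$; assigning $1/M$ to $\{a/b\}$ and $A/M,(M-A)/M$ to $\gamma_1,\gamma_2$ reduces this to the existence of a coprime fraction $A/M\in(\gamma_1,\,1-\gamma_2)=(b_1/p,\,(q-b_2)/q)$ with $\{a/b\}<1/M$. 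The crucial point is that $b_1/p$ and $(q-b_2)/q$ are Farey neighbours, since $p(q-b_2)-qb_1=pq-c=1$; hence the fraction of least denominator strictly between them is their mediant $(b_1+q-b_2)/(p+q)$, of denominator $p+q$. Thus condition (2) is satisfiable precisely when $\{a/b\}<\tfrac1{p+q}$, i.e. when $a/b\in(-1,-1+\tfrac1{p+q})$, which under the M\"obius map of the previous paragraph is exactly the set $r<pq-(p+q)$. The final clause then follows from $pq-(p+q)=(p-1)(q-1)-1\ge 1$ for coprime $p,q\ge 2$.

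The step I expect to be the main obstacle is showing that the mediant is genuinely extremal, that is, that the supremum $\tfrac1{p+q}$ of the admissible values of $\{a/b\}$ cannot be raised by some competing assignment of $(A/M,(M-A)/M,1/M)$ to the targets. Each such assignment requires covering one of $\gamma_1,\gamma_2$ by $1/M$, which forces $M$ to be small; since $\gamma_1+\gamma_2=(pq-1)/pq$ is close to $1$ (so one of the $\gamma_i$ is large), the remaining strict inequalities together with the coprimality of $A$ and $M$ then admit no solution, and this elimination must be carried out carefully in the boundary cases. Alternatively, the upper bound can be supplied by Heegaard Floer theory, since $X(r)=S^3_r(T_{p,q})$ is an L-space for $r\ge pq-(p+q)$ (which equals $2g-1$, with $g$ the genus of $T_{p,q}$) and hence carries no co-oriented taut, in particular no horizontal, foliation by \cite[Theorem 1.4]{OS1}; this pairs with the mediant construction, which supplies foliations for all $r<pq-(p+q)$, to give the stated equivalence.
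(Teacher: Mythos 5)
Your proposal is correct and follows essentially the same route as the paper: translate the problem via Dehn filling into the Eisenbud--Hirsch--Neumann/Jankins--Neumann/Naimi criteria of Proposition~\ref{strongly detected slopes}, convert between the $(\mu,\lambda)$-coordinates and the Seifert parameter $a/b$, and obtain the sharp endpoint $pq-(p+q)$ from the Heegaard--Floer characterization of L-space surgeries on the torus knot combined with \cite[Theorem 1.4]{OS1}, exactly the shortcut the paper itself takes after noting the direct calculation is ``straightforward, though tedious.'' The only substantive difference is to your credit: your Farey-neighbour/mediant argument (using $p(q-b_2)-qb_1=1$) actually carries out the constructive half of the calculation the paper skips, supplying foliations for all $r<pq-(p+q)$ combinatorially so that Heegaard--Floer input is needed only for the non-existence direction.
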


\begin{proof}
Fix integers $a, b$ such that $1 = bp + aq$ and $0 < a < p$.  Note that $b < 0$ but $p(q + b) > aq + pb =1$, so $0 < b_0 = b + q < q$. There is a Seifert structure on $X$ with base orbifold $D^2(p,q)$ where the two exceptional fibres have Seifert invariants $(p, a)$ and $(q, b)$. Hence if $r = \frac{n}{m} \ne pq$ is a reduced rational fraction where $m > 0$, the Dehn filling $X(r)$ of $X$ is a Seifert fibred manifold with Seifert invariants $(0; 0; \frac{a}{p}, \frac{b}{q}, \frac{m}{n - mpq}) = (0; 0; \frac{a}{p}, \frac{b}{q}, \frac{1}{r - pq})$. Then $X(r)$  also has a Seifert structure with Seifert invariants $(0; 1 - \lfloor \frac{1}{pq - r} \rfloor; \frac{a'}{p}, -\frac{b}{q},  \{\frac{1}{pq - r}\})$ where $a' = p - a$. Assume that $ \{\frac{1}{pq - r}\} \ne 0$. Then arguing as in the proof of Proposition \ref{strongly detected slopes}, if $X(r)$ admits a horizontal foliation, we have $\lfloor \frac{1}{pq - r} \rfloor \in \{-1, 0\}$. If $\lfloor \frac{1}{pq - r} \rfloor = -1$, then $X(r)$ has Seifert invariants $(0; 1 ; \frac{a}{p}, \frac{b_0}{q},  1 - \{\frac{1}{pq - r}\})$ and there are positive integers $A_1, A_2$ coprime with an integer $M < A_1, A_2$ such that $\frac{a}{p} < \frac{A_1}{M}, \frac{b_0}{q} < \frac{A_2}{M}$ and $\frac{A_1 + A_2}{M} \leq 1$. But this is impossible since then $\frac{A_1 + A_2}{M} > \frac{a}{p} + \frac{b_0}{q} = 1 + \frac{1}{pq}$. Hence $\lfloor \frac{1}{pq - r} \rfloor = 0$ and therefore $0 < \frac{1}{pq - r} < 1$ and $X(r)$ has Seifert invariants $(0; 1 ; \frac{a'}{p}, -\frac{b}{q},  \{\frac{1}{pq - r}\})$. It follows that $r < pq -1$. A straightforward, though tedious, calculation yields the bound stated in the proposition. This calculation can be avoided if we are willing to appeal to results from Heegaard-Floer theory. For instance, the $(p, q)$ torus knot  $K$ is an L-space knot since $pq-1$ surgery on $K$ yields a lens space. Hence as the genus of $K$ is $\frac{1}{2} (p-1)(q-1)$, $K(r)$ is an L-space if and only if $r \geq pq - (p+q)$ (\cite[Proposition 9.5]{OS5}. See also \cite[Fact 2, page 221]{Hom}). Hence, according to Proposition \ref{seifert case}, $X(r)$ admits a horizontal foliation if and only if $r < pq - (p+q)$.
\end{proof}

\begin{prop} \label{limit235}
Let $X$ be a Seifert manifold which is the exterior of a knot $K$ in $\Sigma(2,3,5)$, the Poincar\'e homology $3$-sphere. 

$(1)$ $K$ is a fibre in a Seifert structure on $\Sigma(2,3,5)$. 

$(2)$ $X$ has base orbifold $D^2(2,3), D^2(2, 5), D^2(3,5)$, or $D^2(2,3,5)$.

$(3)$ Suppose that $K$ has multiplicity $j \geq1$. Then there is a choice of meridian $\mu$ and longitude $\lambda$ of $K$ such that $X$ admits a horizontal foliation detecting the slope $a \mu + b \lambda$ if and only if 
$$\frac{a}{b} > -29 \hbox{ if } j = 1$$ 
and 
$$\frac{a}{b} <  \left\{ \begin{array}{ll} 
7 & \hbox{ if } j = 2 \\
3 & \hbox{ if } j = 3 \\
1 & \hbox{ if } j = 5 
\end{array} \right. $$
In particular, there is a sequence of slopes $\alpha_n$ on $\partial X$ which converge projectively to the meridian of $K$ such that $X$ admits a horizontal foliation of slope $\alpha_n$ for each $n$. 

$(4)$ There is a unique slope on $\partial X$ such that $X(\alpha) \cong \Sigma(2,3,5)$. 

\end{prop}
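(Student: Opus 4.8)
The plan is to treat the four parts in sequence, using the geometry of the Poincar\'e sphere to set up the Seifert data and then running the arithmetic of the Eisenbud--Hirsch--Neumann / Jankins--Neumann / Naimi criterion.

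For $(1)$ and $(2)$ I would exploit that $\Sigma(2,3,5)$ is a small Seifert fibred $\mathbb Z$-homology sphere whose fibration over $S^2(2,3,5)$ is unique up to isotopy. Let $V = \overline{\Sigma(2,3,5)\setminus X}$ be the filling solid torus with core $K$, and let $\phi$ be the fibre slope of the Seifert structure on $X$ read off on $\partial X = \partial V$. If $\phi$ coincided with the meridian $\mu$ of $K$, then $\Sigma(2,3,5) = X(\phi)$ would be a connected sum of lens spaces, exactly as in the proof of Lemma \ref{strongly rational not vertical}; this is impossible since $\Sigma(2,3,5)$ is an irreducible $\mathbb Z$-homology sphere distinct from $S^3$. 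Hence $\phi \neq \mu$, so $V$ admits a Seifert fibration matching that of $X$ along $\partial V$, and the two structures glue to a Seifert fibration of $\Sigma(2,3,5)$ in which $K$ is a fibre. By uniqueness this is the standard fibration, proving $(1)$. Assertion $(2)$ then follows by inspection: if $K$ is a regular fibre (multiplicity $j = 1$) the three cone points $2,3,5$ survive in the base of $X$, giving $D^2(2,3,5)$, while if $K$ is the exceptional fibre of multiplicity $j \in \{2,3,5\}$ its removal leaves the two remaining cone points, giving $D^2(3,5)$, $D^2(2,5)$, or $D^2(2,3)$ respectively.

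For $(3)$ I would mimic the proof of Proposition \ref{limit}. Having fixed the nullhomologous longitude $\lambda$ and a suitable meridian $\mu$, so that a slope $a\mu + b\lambda$ is recorded by $\tfrac{a}{b}$, I would in each of the four cases write out explicit Seifert invariants for $X$, compute those of the filling $X(a\mu + b\lambda)$, and substitute them into the three EHN/JN/Na conditions displayed in the proof of Proposition \ref{strongly detected slopes}. By Proposition \ref{strongly detected slopes}$(1)$ the detected slopes already form $U \cap \mathcal{S}_{rat}(X)$ for a single open arc $U$, so the whole task is to locate the endpoints of $U$: one endpoint is the meridian $\mu$, whose filling is the L-space $\Sigma(2,3,5)$ and which therefore detects nothing, and the other is the finite threshold. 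Evaluating the inequalities yields the stated bounds $\tfrac{a}{b} > -29$ when $j = 1$ and $\tfrac{a}{b} < 7, 3, 1$ when $j = 2,3,5$; the projective convergence of a sequence $\alpha_n$ to $\mu$ is then immediate, since $\mu$ is an endpoint of $U$. This case-by-case arithmetic is the main obstacle, but, as in Proposition \ref{limit}, it can be cross-checked against Heegaard Floer input: each $K$ is a Seifert fibre whose $\Sigma(2,3,5)$-filling is an L-space, so the threshold slope agrees with the boundary of the set of L-space fillings.

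Finally, for $(4)$ existence is free, since $X(\mu) \cong \Sigma(2,3,5)$ by construction. For uniqueness, suppose $X(\alpha) \cong \Sigma(2,3,5)$. Then $\alpha$ is not the fibre slope (else $X(\alpha)$ is reducible), so $X(\alpha)$ is Seifert fibred over $S^2$, and by the uniqueness of the fibration of $\Sigma(2,3,5)$ together with the incompressibility of $\partial X$ its fibration must restrict to the given one on $X$. Consequently the two (or three) exceptional fibres of $X$ persist and the core of the filling is forced to be a fibre whose multiplicity completes the multiset to $\{2,3,5\}$, that is, of multiplicity $j$. Among the slopes $\alpha$ producing a fibre of multiplicity $j$ the Euler number $e(X(\alpha))$ is a strictly monotonic function of $\tfrac{a}{b}$, while $\Sigma(2,3,5)$ realizes a single value of the Euler number; hence at most one such slope occurs, and combined with existence this gives exactly one. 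The point needing the most care is the rigidity step forcing the fibration of $X(\alpha)$ to agree with that of $X$, which is precisely where the uniqueness of the Seifert fibration of $\Sigma(2,3,5)$ is essential.
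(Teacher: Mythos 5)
Your proposal tracks the paper's proof closely. Parts (1) and (2) are argued exactly as in the paper: since $\Sigma(2,3,5)$ is irreducible with no lens space summands, the meridian cannot be the fibre slope, so the Seifert structure on $X$ extends over the filling solid torus with $K$ a fibre, and the base orbifolds are read off. For (3), your plan (explicit Seifert invariants for each filling, the Eisenbud--Hirsch--Neumann/Jankins--Neumann/Naimi criterion, then locating the endpoints of the arc $U$ of Proposition \ref{strongly detected slopes}) is precisely the paper's method, but be aware that everything you defer is the actual content of the paper's proof: the elimination of $\lfloor \frac{m}{n}\rfloor \notin \{-1,0\}$ case by case via \cite[Theorem 2]{JN} and Naimi's resolution of Conjecture 2 of \cite{JN}, and, just as essentially, the change of basis expressing $\mu$ and $\lambda$ in fibration coordinates, which the paper extracts from $e(X_j(\lambda_j)) = 0$ and $|e(\Sigma(2,3,5))| = \frac{1}{30}$ (giving $\lambda_1 = -30\sigma_1 + \phi_1$, $\mu_1 = \sigma_1$, $\lambda_j = -pq\,\sigma_j + (p+q-pq)\phi_j$, $\mu_j = j\sigma_j + \phi_j$ for $j>1$). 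Note also that your remark that ``one endpoint of $U$ is the meridian'' is an output of this arithmetic, not an input: $\mu \notin U$ (because $\Sigma(2,3,5)$ is an L-space) does not by itself make $\mu$ a boundary point of $U$, so the ``in particular'' clause of (3) still rests entirely on the computed bounds.

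Your part (4) is a mild variant of the paper's and needs one repair. The rigidity step is shared: the core of the filling torus has exterior $X$, so by (1)--(2) and uniqueness of the Seifert structures it is a fibre of multiplicity $j$, whence $\Delta(\alpha,\phi) = j$. The paper then combines this with $\Delta(\alpha,\lambda_j)=1$ (from $H_1 = 0$) and solves, allowing a sign $\epsilon = \pm 1$ in the resulting equation. You instead invoke strict monotonicity of $e(X(\alpha))$ on the set $\Delta(\alpha,\phi)=j$; but the Euler number of $\Sigma(2,3,5)$ is well defined only after fixing an orientation, so monotonicity yields at most one slope for each of $\Sigma(2,3,5)$ and $-\Sigma(2,3,5)$, i.e.\ at most \emph{two} slopes for the unoriented statement. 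To finish you must rule out the opposite-orientation value by an integrality check: e.g.\ for $j=1$, writing $\alpha = \sigma_1 + m\phi_1$ one gets $e = -\bigl(\frac{1}{30} + m\bigr)$, which over integers $m$ hits $-\frac{1}{30}$ only at $m=0$ and never $+\frac{1}{30}$, and similarly in the cases $j = 2,3,5$. This is exactly the role played by the $\epsilon = \pm 1$ computation in the paper, which shows the alternative sign forces a non-integral $b$; with that line added, your argument for (4) is sound.
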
 

\begin{proof}
The boundary of $X$ is incompressible since the fundamental group of $\Sigma(2,3,5)$ is non-abelian. It follows from Lemma \ref{horizontal} that $X$ has base orbifold of the form $D^2(a_1, a_2, \ldots, a_n)$ where each $a_i \geq 2$ and $n \geq 2$. Since $\Sigma(2,3,5)$ has no lens space summands, the meridian of $K$ cannot be the fibre slope of $X$. Thus the Seifert structure on $X$ extends to one on $\Sigma(2,3,5)$ in which $K$ is a fibre. This implies assertions (1) and (2) of the proposition.

Next we deal with (3). 
Let $K_j$ be a fibre of multiplicity $j$ in $\Sigma(2,3,5)$ for $j = 1, 2, 3, 5$ and let $X_0$ be the exterior of $K_1 \cup K_2 \cup K_3 \cup K_5$. Denote by $T_j$ the boundary component of $X_0$ corresponding to $K_j$ and by $\mu_j$ the meridional slope of $K_j$ on $T_j$. Let $\phi_j$ be the fibre slope on $T_j$. Note that $X_0$ is a trivial circle bundle over a $4$-punctured sphere $Q$. Orient $Q$. Since $\Sigma(2,3,5)$ has Seifert invariants $(0; -1, \frac12, \frac13, \frac15)$, there is a section of this bundle with image $\widetilde Q \subset X_0$ such that if $\sigma_j$ is the slope of $\widetilde Q \cap T_j$ oriented by the induced orientation from $Q$. Orient the fibre of $X_0$ so that for each $j$, $\sigma_j \cdot \phi_j = 1$. 

There is a horizontal foliation on $X_j$ detecting the slope $n \sigma_j + m \phi_j$ if and only if the ($n \sigma_j + m \phi_j$)-Dehn filling of $X_j$ admits a horizontal foliation. The latter problem has been resolved in the papers \cite{EHN}, \cite{JN}, and \cite{Na}. First we prove that $X_j$ has a horizontal foliation if and only if $\frac{m}{n} \in (-1, 0)$ for $j =1$ and $\frac{m}{n} \in (0, \frac{1}{j})$ for $j > 1$.  

The exterior $X_j$ of $K_j$ is obtained from $X_0$ by performing the $(T_k, \mu_k)$-filling for $k \ne j$. It follows that the $(n \sigma_j + m \phi_j)$-Dehn filling of $X_j$ has Seifert invariants 
\begin{itemize}

\vspace{-.2cm} \item $(0; -1, \frac12, \frac13, \frac15,  \frac{m}{n})$ if $j = 1$;

\vspace{.2cm} \item $(0; -1, \frac13, \frac15, \frac{m}{n})$ if $j = 2$;

\vspace{.2cm} \item $(0; -1, \frac12, \frac15, \frac{m}{n})$ if $j = 3$;

\vspace{.2cm} \item $(0; -1, \frac12, \frac13, \frac{m}{n})$ if $j = 5$.
 
\end{itemize}
\vspace{-.2cm}
Suppose first that $j = 1$. If $n = 0$, $X_1(n \sigma_1+ m \phi_1) = X_1(\phi_1)$ is a connected sum of lens spaces of orders $2, 3,$ and $5$ so does not admit a taut foliation (see e.g. \cite[Corollary 9.1.9]{CC2}). If $|n| = 1$, then $\Delta (n \sigma_1+ m \phi_1, \phi_1) = 1$, so $X_1(n \sigma_1+ m \phi_1)$ admits a Seifert structure with base orbifold $S^2(2,3,5)$. Hence it has a finite fundamental group and so does not admit a horizontal foliation. Assume then that $|n| > 1$, and therefore $0 < \{\frac{m}{n}\} = \frac{m}{n} - \lfloor \frac{m}{n}\rfloor < 1$. In this case, $X_1(n \sigma_1 + m \phi_1)$ has Seifert invariants $(0; \lfloor \frac{m}{n}\rfloor - 1, \frac12, \frac13, \frac15, \{\frac{m}{n}\})$. Theorem 2 of \cite{JN} implies that when $\lfloor \frac{m}{n}\rfloor = -1$ there is a horizontal foliation for all values of $\{\frac{m}{n}\}$. In other words, whenever $\frac{m}{n} \in (-1, 0)$. It also shows that there is no horizontal foliation when $\lfloor \frac{m}{n}\rfloor < -2$ or $\lfloor \frac{m}{n}\rfloor > 0$

If $\lfloor \frac{m}{n}\rfloor = 0$, then $X_1(n \sigma_1 + m \phi_1)$ has Seifert invariants $(0; - 1, \frac12, \frac13, \frac15, \{\frac{m}{n}\})$. Conjecture 2 of \cite{JN} was verified in \cite{Na} so in this case $X_1(n \sigma_1 + m \phi_1)$ has a horizontal foliation if and only if we can find coprime integers $0 < A < M$ such that for some permutation $\{\frac{a_1}{m_1}, \frac{a_2}{m_2}, \frac{a_3}{m_3}, \frac{a_4}{m_4}\}$ of $\{\frac12, \frac13, \frac15, \{\frac{m}{n}\}\}$ satisfies $\frac{a_1}{m_1} < \frac{1}{M}, \frac{a_2}{m_2} < \frac{1}{M}, \frac{a_3}{m_3} < \frac{A}{M}$ and $\frac{a_4}{m_4} < \frac{M-A}{M}$. It is elementary to verify that there is no such pair $A, M$. 

If $\lfloor \frac{m}{n}\rfloor = -2$, then $X_1(n \sigma_1 + m \phi_1)$ has Seifert invariants $(0; - 3, \frac12, \frac13, \frac15, \{\frac{m}{n}\})$ and therefore also $(0; - 1, \frac12, \frac23, \frac45, 1 - \{\frac{m}{n}\})$. As in the previous paragraph, $X_1(n \sigma_1 + m \phi_1)$ never admits a horizontal foliation on this case. We conclude that $X_1(n \sigma_1 + m \phi_1)$ admits a horizontal foliation if and only if $\frac{m}{n} \in (-1, 0)$. 

We proceed similarly when $j = 2$. As above we can rule out the cases $n = 0$ and $|n| = 1$. When $|n| > 1$, so $0 < \{\frac{m}{n}\} = \frac{m}{n} - \lfloor \frac{m}{n}\rfloor < 1$, $X_2(n \sigma_2 + m \phi_2)$ has Seifert invariants $(0; \lfloor \frac{m}{n}\rfloor - 1, \frac13, \frac15, \{\frac{m}{n}\})$. By Theorem 2 of \cite{JN}, there is no horizontal foliation when $\lfloor \frac{m}{n}\rfloor < -1$ or $\lfloor \frac{m}{n}\rfloor > 0$. If $\lfloor \frac{m}{n}\rfloor = 0$, $X_2(n \sigma_2 + m \phi_2)$ has Seifert invariants $(0; - 1, \frac13, \frac15, \{\frac{m}{n}\})$. Conjecture 2 of \cite{JN} was verified in \cite{Na} so in this case $X_2(n \sigma_2 + m \phi_2)$ has a horizontal foliation if and only if we can find coprime integers $0 < A < M$ such that for some permutation $\{\frac{a_1}{m_1}, \frac{a_2}{m_2}, \frac{a_3}{m_3}\}$ of $\{\frac13, \frac15, \{\frac{m}{n}\}\}$ satisfies $\frac{a_1}{m_1} < \frac{1}{M}, \frac{a_2}{m_2} < \frac{A}{M}$ and $\frac{a_3}{m_3} < \frac{M-A}{M}$. It is elementary to verify that there is a solution to this problem if and only if $\frac{m}{n} \in (0, \frac12)$. 
On the other hand, if $\lfloor \frac{m}{n}\rfloor = -1$, $X_1(n \sigma_1 + m \phi_1)$ has Seifert invariants $(0; -2, \frac13, \frac15, \{\frac{m}{n}\})$ and therefore $(0; -1, \frac23, \frac45, 1-\{\frac{m}{n}\})$. As above, $X_2(n \sigma_2 + m \phi_2)$ never admits a horizontal foliation on this case. We conclude that $X_2(n \sigma_2 + m \phi_2)$ admits a horizontal foliation if and only if $\frac{m}{n} \in (0, \frac12)$. 

We leave the cases $j = 3, 5$ to the reader. 

To complete the proof of (3) we must express the conclusions we have just obtained in terms of appropriately chosen meridians and longitudes for the knots $K_j$. We proceed as follows. The euler number of $X_j(n \sigma_j + m \phi_j)$ is given, up to sign, by the sum of its Seifert invariants. Further, since $H_1(X_j(\lambda_j)) \cong \mathbb Z$, we can solve for the coefficients $n, m$ of $\lambda_j$. For instance for $j > 1$, set $\{j, p, q\} = \{2,3,5\}$. If $\lambda_j = n\sigma_j + m\phi_j$, then $0 = |e(X_j(n\sigma_j + m\phi_j))| = |-1 + \frac{1}{p} + \frac{1}{q} + \frac{m}{n}|$. Thus $\frac{m}{n} = \frac{pq - (p+q)}{pq}$. Since $\gcd(pq, pq - (p+q)) = 1$, we have 
$$\lambda_j = -pq \sigma_j + (p + q - pq ) \phi_j$$ 
Similarly for $j = 1$ we have $\frac{m}{n} = 1 - (\frac12 + \frac13 + \frac15) = -\frac{1}{30}$. Hence 
$$\lambda_1 = -30 \sigma_1 + \phi_1$$ 
The $\mu_j$ Dehn filling of $X_j$ yields $\Sigma(2,3,5)$ and it is known that $|e(\Sigma(2,3,5))| = \frac{1}{30}$. Combined with the identity $\Delta(\mu_j, \lambda_j) = 1$ we can solve for the coefficients of $\mu_j$: 
$$\mu_j = \left\{ \begin{array}{ll} 
\sigma_1 & \hbox{if } j = 1 \\ 
j\sigma_j + \phi_j & \hbox{if } j > 1 
\end{array} \right.$$
With these choices, it is easy to verify that the set of detected slopes $a \mu_1 + b \lambda_1$ corresponds to the interval specified in (3). 

To prove (4), let $\alpha = a \mu_j + b \lambda_j$ be a slope on $\partial X_j$ such that $X_j(\alpha) \cong \Sigma(2,3,5)$. Since $\Sigma(2,3,5)$ is a $\mathbb Z$-homology $3$-sphere, $1 = \Delta(\alpha, \lambda_j) = |a|$. Without loss of generality we can suppose that $a = 1$. On the other hand, the core of the filling torus in $X_j(\alpha)$ is $K_j$, so
$$j = \Delta(\alpha, \phi_j) = \left\{ 
\begin{array}{ll} 
\Delta( \mu_j + b \lambda_j, 30 \mu_1 + \lambda_1) & \hbox{ if } j = 1 \\
\Delta(\mu_j + b \lambda_j, pq \mu_j + j \lambda_j) & \hbox{ if } j > 1
\end{array} \right.$$
$$ \hspace{-.4cm} = \left\{ 
\begin{array}{ll} 
|1 - 30b| & \hbox{ if } j = 1 \\
|j - pqb| & \hbox{ if } j > 1
\end{array} \right.$$
Hence there is an $\epsilon \in \{\pm 1\}$ such that $j \epsilon = \left\{ 
\begin{array}{ll} 
1 - 30b & \hbox{ if } j = 1 \\
j - pq b & \hbox{ if } j > 1
\end{array} \right.$. It follows that $b = 0$ so that $\alpha = \mu_j$. This proves (4).
\end{proof}

\begin{cor} \label{s3 and s235}
Suppose that $K$ is a knot in either $S^3$ or $\Sigma(2,3,5)$ whose exterior $X$ is Seifert fibered and let $U$ be the connected open subset of $\mathcal{S}(X)$ described in Proposition \ref{strongly detected slopes}. 

$(1)$ If $X$ is the trefoil exterior, then $U$ contains all the slopes $\alpha$ on $\partial X$ such that $X(\alpha)$ is a $\mathbb Z$-homology $3$-sphere other than $S^3$ and $\Sigma(2,3,5)$. The two slopes yielding the latter two manifolds are the end-points of $\overline{U}$.

$(2)$ If $X$ is not the trefoil exterior, then $U$ contains all the slopes $\alpha$ on $\partial X$ such that $X(\alpha)$ is a $\mathbb Z$-homology $3$-sphere other than the meridian of $K$, which is an end-point of $\overline{U}$.
\qed
\end{cor}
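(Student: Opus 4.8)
The plan is to enumerate the possible Seifert fibred knot exteriors, read off the arc $U$ and its two endpoint slopes from Propositions~\ref{limit} and~\ref{limit235}, and compare these against the set of $\mathbb Z$-homology sphere filling slopes. Recall that for a knot $K$ in a $\mathbb Z$-homology sphere, with meridian $\mu$ and longitude $\lambda$, a slope $\alpha = a\mu + b\lambda$ produces a $\mathbb Z$-homology sphere $X(\alpha)$ precisely when $\Delta(\alpha, \lambda) = |a| = 1$. Writing each slope in the affine coordinate $r$ used in the two propositions (the ratio of the $\mu$-- to the $\lambda$--coefficient), the $\mathbb Z$-homology sphere slopes are therefore the meridian $r = \infty$ together with $r = \pm 1/b$ for $b \in \mathbb Z \setminus \{0\}$; in particular every finite such slope satisfies $|r| \leq 1$. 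By the classification of knots with Seifert fibred exterior, $K$ is either a torus knot in $S^3$ or, by Proposition~\ref{limit235}(1), a fibre $K_j$ of multiplicity $j \in \{1,2,3,5\}$ in $\Sigma(2,3,5)$.

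Before splitting into cases I would identify the one exterior lying in both families. By Proposition~\ref{limit235}(2) the exterior $X_5$ of the order--$5$ fibre has base orbifold $D^2(2,3)$, and since $\Sigma(2,3,5)$ is $\pm 1$ surgery on the trefoil with the surgery core a fibre of multiplicity $5$, the manifold $X_5$ is the trefoil exterior. Thus $X_5$ is governed by case (1), and for case (2) only the multiplicities $j \in \{1,2,3\}$ remain among knots in $\Sigma(2,3,5)$.

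For case (1) I would apply Proposition~\ref{limit} with $(p,q) = (2,3)$: horizontal foliations occur exactly for $r < pq - (p+q) = 1$, so $U = \{ r < 1 \}$, an open arc whose closure $\overline U$ has endpoints the meridian $r = \infty$ and the slope $r = 1$. The meridian filling returns $S^3$, while $r = 1$ is the $+1$--surgery yielding $\Sigma(2,3,5)$, so these are exactly the two special fillings, as claimed. Every other $\mathbb Z$-homology sphere slope has $|r| \leq 1$ with $r \neq 1, \infty$ and hence lies in $\{r < 1\} = U$, proving (1). For a torus knot exterior other than the trefoil one has $pq - (p+q) = (p-1)(q-1) - 1 \geq 3$, so $U = \{ r < pq - (p+q)\}$ and $\overline U$ has endpoints the meridian $r = \infty$ (giving $S^3$) and $r = pq - (p+q)$; the latter is not a $\mathbb Z$-homology sphere slope because $H_1(X(pq-(p+q))) \cong \mathbb Z/(pq-(p+q)) \neq 0$. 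Since every $\mathbb Z$-homology sphere slope satisfies $|r| \leq 1 < 3 \leq pq - (p+q)$, all of them lie in $U$ except the meridian, which is the asserted boundary slope of $\overline U$.

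Finally, for $K_j \subset \Sigma(2,3,5)$ with $j \in \{1,2,3\}$ I would read $U$ off Proposition~\ref{limit235}(3), obtaining $U = \{ r > -29\}$ when $j = 1$ and $U = \{ r < 7 \}$, $U = \{ r < 3 \}$ when $j = 2, 3$. In each case one endpoint of $\overline U$ is the meridian $\mu_j$, which produces $\Sigma(2,3,5)$ by Proposition~\ref{limit235}(4), while the other endpoint is the finite value $-29$, $7$, or $3$, none of which is a $\mathbb Z$-homology sphere slope since there $|a| \neq 1$. As every $\mathbb Z$-homology sphere slope satisfies $|r| \leq 1$, all lie in $U$ apart from the meridian, which gives (2). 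I expect the main obstacle to be the bookkeeping across the two different meridian--longitude conventions of Propositions~\ref{limit} and~\ref{limit235} --- in particular, recognising that the order--$5$ fibre exterior is the trefoil exterior, so that its troublesome boundary slope $r = 1$ (which yields $S^3$ rather than a generic $\mathbb Z$-homology sphere) is correctly absorbed into case (1) instead of appearing to contradict case (2).
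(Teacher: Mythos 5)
Your proposal is correct and is essentially the argument the paper intends (the corollary is stated with an immediate \qed precisely because it follows from Propositions \ref{strongly detected slopes}, \ref{limit} and \ref{limit235} in just this way): you read off the endpoints of $\overline{U}$ from the inequalities $r < pq-(p+q)$ and $a/b > -29$, $a/b < 7, 3, 1$, observe that $\mathbb Z$-homology sphere slopes satisfy $\Delta(\alpha,\lambda)=1$ and hence $|r|\leq 1$, and correctly absorb the multiplicity-$5$ fibre of $\Sigma(2,3,5)$, whose exterior is the trefoil exterior, into case $(1)$ --- the same identification the paper uses in Lemma \ref{distance 1} and Claim \ref{horizontal2}. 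Your bookkeeping at the endpoints (meridian versus the non-homology-sphere slopes $-29$, $7$, $3$, $pq-(p+q)$) matches the paper's conventions, so no gap remains.
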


\section{Existence of rational foliations on aspherical graph  $\mathbb Z$-homology $3$-spheres} \label{existence}

We prove Theorem \ref{taut} in this section by induction on the number of its JSJ pieces, the base case being dealt with in Proposition \ref{seifert case}. We suppose below that $W$ is a non-Seifert graph manifold $\mathbb Z$-homology $3$-sphere. 

\begin{lemma} \label{distance 1}
Suppose that $M$ is a graph manifold $\mathbb Z$-homology solid torus with incompressible boundary. If $\alpha$ and $\beta$ are slopes on $\partial M$ whose associated fillings are $\mathbb Z$-homology $3$-spheres which are either $S^3, \Sigma(2,3,5)$ or reducible, then $\Delta(\alpha, \beta) \leq 1$. 
\end{lemma}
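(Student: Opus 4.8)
The plan is to argue by induction on the number of JSJ pieces of $M$, the base case (where $M$ is Seifert fibred) being already understood through Proposition \ref{seifert case} and Corollary \ref{s3 and s235}. I first fix the homology. Since $M$ is a $\mathbb Z$-homology solid torus, $H_1(M) \cong \mathbb Z$ and there is a well-defined longitude $\lambda$ (the primitive class on $\partial M$ that is rationally trivial in $M$); choosing a dual meridian $\mu$, the only slopes whose fillings are $\mathbb Z$-homology $3$-spheres are $\gamma_k = \mu + k\lambda$, $k \in \mathbb Z$, and $\Delta(\gamma_j,\gamma_k) = |j-k|$. Thus it suffices to show that the set $B$ of indices $k$ for which $M(\gamma_k)$ is $S^3$, $\Sigma(2,3,5)$, or reducible has diameter at most $1$.

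Next I would localize at the Seifert piece $P$ adjacent to $\partial M$, with fibre slope $\phi$ on $\partial M$. By Lemma \ref{horizontal} the base orbifold of $P$ is planar and hyperbolic, and $\phi \neq \lambda$. The structural step is to show that for $\gamma \neq \phi$ the Seifert fibration extends across the filling solid torus, so $P(\gamma)$ is again Seifert fibred over a planar base; an orbifold Euler characteristic count shows this base is a disk with at most one cone point (so $P(\gamma)$ is a solid torus) only when $\Delta(\gamma,\phi)=1$ and $P$ is a \emph{cable space} (Seifert fibred over an annulus with a single exceptional fibre), and otherwise $P(\gamma)$ is irreducible and is not a solid torus, whence $M(\gamma)=P(\gamma)\cup_{T_0}M'$ stays an irreducible graph manifold. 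This yields two consequences: (i) $M(\gamma)$ reducible forces $\gamma=\phi$, so $B$ contains at most one reducible slope; and (ii) if $M(\gamma)$ has finite fundamental group (i.e.\ $M(\gamma)\cong S^3$ or $\Sigma(2,3,5)$) then $M(\gamma)$ is an atoroidal Seifert manifold, which, since the fibre slope of $P$ on the JSJ torus $T_0$ is unchanged by the filling and $M'$ has incompressible boundary (so is not a solid torus), forces $P(\gamma)$ to be a solid torus, hence $\Delta(\gamma,\phi)=1$ and $P$ a cable space with exactly two boundary tori; in that situation $M=P\cup_{T_0}M'$ with $M'$ a graph manifold $\mathbb Z$-homology solid torus of incompressible boundary and strictly fewer JSJ pieces.

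Putting these together, every finite-$\pi_1$ index in $B$ satisfies $\Delta(\gamma_k,\phi)=1$; writing $\phi = c\mu + d\lambda$ with $c \neq 0$, this is the congruence $|ck-d|=1$, whose solutions spread over a $k$-range exceeding $1$ only when $c=\pm1$. In that case $\phi=\gamma_{k_0}$ is itself a homology-sphere slope, $M(\phi)$ is the unique reducible member of $B$, and the sole threat to the bound is that both $\gamma_{k_0+1}$ and $\gamma_{k_0-1}$ lie in $B$. Being the two distance-$1$ slopes on opposite sides of $\phi$, each gives a solid-torus filling of the cable space $P$, so $M(\gamma_{k_0\pm1}) \cong M'(\delta_\pm)$ for slopes $\delta_\pm$ on $T_0$ that are again of type $S^3$, $\Sigma(2,3,5)$, or reducible; the inductive hypothesis applied to $M'$ then gives $\Delta_{T_0}(\delta_+,\delta_-) \le 1$.

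The main obstacle, and the crux of the argument, is to contradict this last inequality by computing how the cable space $P$ transports intersection numbers from $\partial M$ to $T_0$. Passing a solid-torus filling through a cable space of exceptional multiplicity $a\ge 2$ multiplies intersection numbers by $a^2$ (the classical cabling formula), so $\Delta(\gamma_{k_0+1},\gamma_{k_0-1})=2$ forces $\Delta_{T_0}(\delta_+,\delta_-)=2a^2\ge 8$, contradicting $\Delta_{T_0}(\delta_+,\delta_-)\le 1$. Hence $B$ cannot meet both sides of $\phi$, so $\operatorname{diam}B\le 1$ and therefore $\Delta(\alpha,\beta)\le 1$. I expect the careful bookkeeping of this cabling transformation — tracking the sections and fibre slopes on the two boundary tori of $P$, and verifying that $M'$ is genuinely an incompressible-boundary $\mathbb Z$-homology solid torus to which induction applies — to be the most delicate point; the irreducibility and orbifold-counting steps are routine given Lemma \ref{horizontal}.
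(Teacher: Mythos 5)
Your overall strategy (induction on JSJ pieces plus the $a^2$ cabling-transport of intersection numbers, which correctly reproduces the distance-$\geq 8$ phenomenon the paper extracts from \cite{BZ1}) is reasonable, but your two structural claims (i) and (ii) are false as stated, and they are load-bearing. The counterexample is the exterior $M$ of a composite knot $K_1 \# K_2$ with graph-knot summands (say two trefoils): the piece $P$ adjacent to $\partial M$ is the composing space, a product $F \times S^1$ over a pair of pants with \emph{no} exceptional fibre, and its fibre slope $\phi$ on $\partial M$ is the meridian $\mu$. Here $M(\phi) = M_1'(\mu_1)\,\#\,M_2'(\mu_2) = S^3$ is a \emph{finite} filling occurring at $\Delta(\gamma,\phi) = 0$, with $P(\phi)$ a punctured connected sum of two solid tori rather than a solid torus (the essential spheres are capped off by the $S^3$ summands, so $T_0$ compresses without $P(\phi)$ being a solid torus). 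This simultaneously refutes your claim (i) (that $\gamma = \phi$ forces reducibility), your claim (ii) (that finite $\pi_1$ forces $P(\gamma)$ to be a solid torus, hence $P$ a cable space and $\Delta(\gamma,\phi)=1$), and the assembly step ``every finite-$\pi_1$ index in $B$ satisfies $\Delta(\gamma_k,\phi)=1$.'' A related misstep: you invoke Lemma \ref{horizontal} for $\phi \neq \lambda$, but part (3) of that lemma requires $P$ to have an exceptional fibre; for product pieces $\phi = \lambda$ can occur (this is exactly the troublesome $\lambda_Y = \phi_Y$ case the paper's proof of Theorem \ref{taut} treats separately), and then your congruence $|ck-d|=1$ with $c \neq 0$ collapses. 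Your claim (i) also fails on the other side: when $P$ is a cable space and $\Delta(\gamma,\phi)=1$, $M(\gamma) = M'(\delta)$ can be reducible with $\gamma \neq \phi$, so ``at most one reducible slope, located at $\phi$'' is not established.

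The root of the problem is that your Seifert-fibration-extension dichotomy only controls fillings through pieces with exceptional fibres; fibre-free (product) pieces admit homology-sphere fillings at the fibre slope itself that can be $S^3$, $\Sigma(2,3,5)$, or reducible, and these escape your case analysis entirely. The lemma is still true in these configurations (for a composing space every filling at a slope other than $\phi$ leaves an incompressible torus, so $B$ is a single point), but proving that requires a separate analysis you have not supplied, including the possibility that after a $T^2 \times I$ degeneration the Seifert fibrations of the complementary pieces match up. For comparison, the paper avoids this local analysis altogether: in the non-Seifert case it quotes Gordon--Luecke \cite{GLu} together with \cite[Theorem 1.2]{BZ2} to bound the distance when a filling is reducible, and the finite-surgery theorem \cite[Theorem 1.2(1)]{BZ1} to show that two finite fillings at distance $\geq 2$ would force exactly the cable-space-over-$D^2(a_1,a_2)$ configuration, which is then killed by the distance-$\geq 8$ cabling estimate --- precisely the theorems that dispose of the cases your localization at $P$ misses.
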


\begin{proof}
If $M$ is Seifert fibred, it has base orbifold $D^2(a_1, \ldots, a_n)$ where $n$ and each $a_i$ are at least $2$. Further, the $a_i$ are pairwise coprime. In this case $M$ admits no fillings which are simultaneously reducible and $\mathbb Z$-homology $3$-spheres. Thus $M(\alpha)$ and $M(\beta)$ are either $S^3$ or $\Sigma(2,3,5)$. If $\alpha$ and $\beta$ are distinct slopes, then $M(\alpha)$ and $M(\beta)$ cannot both be $S^3$ as torus knots admit unique $S^3$-surgery slopes. Similarly Proposition \ref{limit235} implies that $M(\alpha)$ and $M(\beta)$ cannot both be $\Sigma(2,3,5)$. On the other hand, if one of $M(\alpha)$ and $M(\beta)$ is $S^3$ and the other $\Sigma(2,3,5)$, then $M$ must be the trefoil knot exterior and $\Delta(\alpha, \beta) = 1$.

Next suppose that $M$ is not Seifert fibred. If $M(\alpha)$ is reducible, then the main result of \cite{GLu} combines with \cite[Theorem 1.2]{BZ2} to show that $\Delta(\alpha, \beta) \leq 1$. On the other hand, if $M(\alpha)$ and $M(\beta)$ are either $S^3$ or $\Sigma(2,3,5)$ and $\Delta(\alpha, \beta) \geq 2$, then \cite[Theorem 1.2(1)]{BZ1} implies that $M$ has two pieces, one a cable space and the other a Seifert manifold $M_0$ with base orbifold a $2$-disk with two cone points. The proof of \cite[Theorem 1.2(1)]{BZ1} (see \S 8 of \cite{BZ1}) now implies that $M_0$ admits two Dehn fillings yielding $S^3$ or $\Sigma(2,3,5)$ whose slopes are of distance at least $8$, which is impossible. (See the discussion which follows the statement of \cite[Theorem 1.2]{BZ1}.) Thus $\Delta(\alpha, \beta) \leq 1$.
\end{proof}

Let $X$ be a piece of $W$ whose boundary is a torus. (Thus $X$ corresponds to a leaf of the JSJ-graph of $W$.) If $Y = \overline{W \setminus X}$ is the exterior of $X$ in $W$, then $T = X \cap Y$ is an essential torus. Let $\lambda_X$ and $\lambda_Y$ be the longitudes of $X$ and $Y$. For slopes $\alpha$ and $\beta$ on $T$ we have 
$$|H_1(X(\alpha))| = \Delta(\alpha, \lambda_X) \;\;\; \hbox{and} \;\;\; |H_1(Y(\beta))| = \Delta(\beta, \lambda_Y)$$
Hence as we noted in \S \ref{sec: detecting slopes} that $\Delta(\lambda_X, \lambda_Y) = 1$, both $X(\lambda_Y)$ and $Y(\lambda_Y)$ are $\mathbb Z$-homology $3$-spheres. 

Let $\phi_X$ and $\phi_Y$ be primitive elements of $H_1(T)$ representing, respectively, the slopes of the Seifert fibre of $X$ and that of the piece $P$ of $Y$ incident to $T$. Since $X$ has exceptional fibres, $\pm \phi_X \not \in \{\lambda_X, \lambda_Y\}$ (Lemma \ref{horizontal}(3)). It follows that $X(\lambda_X)$ and $X(\lambda_Y)$ are irreducible Seifert manifolds (Lemma \ref{horizontal}(1)). 

\begin{proof}[Proof of Theorem \ref{taut}]
For an integer $n$, set 
$$\alpha_n = \lambda_X + n \lambda_Y$$ 
and observe that $\lim_{|n|} [\alpha_n] = [\lambda_Y] \in \mathcal{S}_{rat}(T)$. Since $X(\lambda_Y)$ is a $\mathbb Z$-homology $3$-sphere, $\alpha_n$ is strongly detected by a horizontal foliation in $X$ for $n \gg 0$ or for $n \ll 0$ or for both (Proposition \ref{strongly detected slopes} and Corollary \ref{s3 and s235}). To complete the proof it suffices to find a rational foliation of $Y$ which strongly detects $\alpha_n$ for all large $|n|$.

Since $\Delta(\alpha_n, \lambda_Y) = 1$, the manifolds $Y(\alpha_n)$ are $\mathbb Z$-homology 3-spheres, and since $Y$ is irreducible and $\Delta(\alpha_n, \alpha_m) = |n-m|$, there are at most two $n$ such that $Y(\alpha_n)$ is either reducible, $S^3$ or $\Sigma(2,3,5)$, and if two, they are successive integers (Lemma \ref{distance 1}). Thus for $|n|$ large, $Y(\alpha_n)$ is an irreducible graph manifold $\mathbb Z$-homology $3$-sphere which is neither $S^3$ nor $\Sigma(2,3,5)$. Hence our inductive hypothesis implies that $Y(\alpha_n)$ admits a rational foliation $\mathcal{F}_n$ for large $|n|$. If $\lambda_Y \ne \phi_Y$, then as $\Delta(\alpha_n, \phi_Y) = |\alpha_n \cdot \phi_Y | \geq |n||\lambda_Y \cdot \phi_Y| - |\lambda_X \cdot \phi_Y|$, for large $|n|$ the JSJ pieces of $Y(\alpha_n)$ are $P(\alpha_n)$ and the JSJ pieces of $\overline{Y \setminus P}$. Thus $\mathcal{F}_n$ induces a rational foliation of slope $\alpha_n$ on $Y$, which completes the proof. 

Suppose then that $\lambda_Y = \phi_Y$. Then Lemma \ref{horizontal}(3) implies that $P$ is a product $F \times S^1$ where $F$ is a planar surface with $|\partial P| \geq 3$ boundary components. Since $\Delta(\alpha_n, \phi_Y) = \Delta(\alpha_n, \lambda_Y) = 1$, each $P(\alpha_n)$ is a product $\bar F \times S^1$ where $\bar F$ is a planar surface with $|\partial P|-1 \geq 2$ boundary components. If $|\partial P| \geq 4$, the JSJ pieces of $Y(\alpha_n)$ are $P(\alpha_n)$ and the JSJ pieces of $\overline{Y \setminus P}$, so we can proceed as above. 

Finally assume that $|\partial P| = 3$ and let $Y_1, Y_2$ be the components of $\overline{Y \setminus P}$. Denote the JSJ torus $Y_i \cap P$ by $T_i$, so $\partial P = \partial Y \cup T_1 \cup T_2$. For each $n$ we have $P(\alpha_n) \cong S^1 \times S^1 \times I$, so $Y(\alpha_n) \cong Y_1 \cup Y_2 \not \cong S^3, \Sigma(2,3,5)$. By induction, there is a rational foliation $\mathcal{F}_n$ on $Y(\alpha_n)$. Since there is no vertical annulus in $P$ which is cobounded by the Seifert fibres of the two pieces of $Y$ incident to $P$, the reader will verify that there is at most one value of $n$ for which there is an annulus in $P(\alpha_n)$ cobounded by these fibres. Thus for $|n| \gg 0$, $Y(\alpha_n)$ is a graph manifold $\mathbb Z$-homology 3-sphere whose pieces are the JSJ pieces of $\overline{Y \setminus P}$. Fix such an $n$ and note that up to isotopy, we can suppose that $\mathcal{F}_n$ is a product fibration on $P(\alpha_n) \cong S^1 \times S^1 \times I$ whose fibre is an annulus. It follows that we can choose primitive classes $\beta_n^1 \in H_1(T_1)$ and $\beta_n^2 \in H_1(T_2)$ representing the slopes of $\mathcal{F}_n$ on $T_1, T_2$ and an integer $k$ such that $k \alpha_n + \beta_n^1 + \beta_n^2 = 0$  in $H_1(P)$. 

Let $p: P = F \times S^1 \to F$ be the projection and denote by $a, b_1, b_2 \in H_1(F)$ the classes associated to the boundary components of $F$, where $a$ corresponds to $p(T)$, $b_1$ to $p(T_1)$, and $b_2$ to $p(T_2)$. We may assume that $a + b_1 + b_2 = 0$. Since $\Delta(\alpha_n, \phi_Y) = 1$, we can also assume that the projection $p: P \to F$ sends $\alpha_n$ to $a$. Fix integers $k_1, k_2$ so that $p_*(\beta_n^j) = k_j b_j$. Clearly $|k_j| = \Delta(\beta_n^j, \phi_j)$ where $\phi_j$ is the slope on $T_j$ determined by the Seifert structure on $P$. Then we have 
$$0 = p_*(k \alpha_n + \beta_n^1 + \beta_n^2) = ka + k_1 b_1 + k_2 b_2$$
in $H_1(F)$. This can only happen if $k = k_1 = k_2$. Thus if $k \ne 0$, the fibration in $P(\alpha_n)$ determined by $\mathcal{F}_n$ is horizontal in $P$ and of slope $\alpha_n$ on $T$, so we are done. 

Suppose then that $k = 0$, so $0 = |k_j| = \Delta(\beta_n^j, \phi_j)$. Thus $[\beta_n^1] = [\phi_1]$ and $[\beta_n^2] = [\phi_2]$ are vertical in $P$. By construction, $Y(\lambda_Y) = Y(\phi_Y) = Y_1(\phi_1) \# Y_2(\phi_2) = Y_1(\beta_n^1) \# Y_2(\beta_n^2)$ and as $\mathbb Z \cong  H_1(Y(\lambda_Y)) =  H_1(Y_1(\phi_1)) \oplus H_1(Y_2(\phi_2))$, we can suppose that $H_1(Y_1(\beta_n^1)) \cong \mathbb Z$ and $H_1(Y_2(\beta_n^2)) \cong \{0\}$. Thus $\phi_1 = \beta_n^1 = \lambda_{Y_1}$ and $\Delta(\phi_2, \lambda_{Y_2}) = \Delta(\beta_n^2, \lambda_{Y_2}) = 1$. 

Fix $\delta_0 \in H_1(T_1)$ such that $1 = \Delta(\delta_0, \lambda_{Y_1}) = \Delta(\delta_0, \phi_1)$ and $p_*(\delta_0) = b_1$. Then $p_*(\lambda_X + \delta_0 + 
 \lambda_{Y_2}) = a + b_1 + b_2 = 0 \in H_1(F)$ and therefore $\lambda_X + \delta_0 +  \lambda_{Y_2}  = j \phi_Y \in H_1(P)$ for some integer $j$. After replacing $\delta_0$ by $\delta_0 - j \phi_1$ we can suppose that 
$$\lambda_X + \delta_0 +  \lambda_{Y_2} = 0 \in H_1(P)$$
With this choice, set $\delta_m = \delta_0 + m \phi_1$.

\begin{claim}\label{horizontal1}  For all but at most finitely many $m$, $Y_1$ admits a rational foliation of slope $\delta_m$.
\end{claim}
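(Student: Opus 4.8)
The claim concerns the piece $Y_1$, a graph manifold $\mathbb Z$-homology solid torus sitting inside $W$, and the family of slopes $\delta_m = \delta_0 + m\phi_1$ on its boundary torus $T_1$. The strategy is to mimic, verbatim, the inductive argument already carried out for $Y$ in the main body of the proof: show that for all but finitely many $m$ the filling $Y_1(\delta_m)$ is an irreducible graph manifold $\mathbb Z$-homology $3$-sphere distinct from $S^3$ and $\Sigma(2,3,5)$, invoke the inductive hypothesis to produce a rational foliation on $Y_1(\delta_m)$, and then check that this foliation restricts to a rational foliation on $Y_1$ of slope $\delta_m$.

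First I would verify the homological bookkeeping. Since $\Delta(\delta_0, \lambda_{Y_1}) = 1$ and $\Delta(\phi_1, \lambda_{Y_1}) = 0$ (recall $\phi_1 = \lambda_{Y_1}$ from the paragraph preceding the claim), we get $\Delta(\delta_m, \lambda_{Y_1}) = \Delta(\delta_0 + m\phi_1, \lambda_{Y_1}) = 1$ for every $m$, so each $Y_1(\delta_m)$ is a $\mathbb Z$-homology $3$-sphere. Next, because $Y_1$ is irreducible with incompressible boundary and $\Delta(\delta_m, \delta_{m'}) = |m - m'|$, Lemma \ref{distance 1} applies: at most two values of $m$ can yield a filling that is reducible, $S^3$, or $\Sigma(2,3,5)$, and if two they are consecutive. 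Thus for all but finitely many $m$, $Y_1(\delta_m)$ is an irreducible graph manifold $\mathbb Z$-homology $3$-sphere other than $S^3$ and $\Sigma(2,3,5)$, and the inductive hypothesis (Theorem \ref{taut} for fewer JSJ pieces) yields a rational foliation $\mathcal{G}_m$ on $Y_1(\delta_m)$.

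The remaining point, and the place where one must be careful, is that the JSJ decomposition of $Y_1(\delta_m)$ should agree with that of $Y_1$ away from the filling torus, so that $\mathcal{G}_m$ can be read back as a rational foliation on $Y_1$ detecting $\delta_m$. Let $P_1$ denote the piece of $Y_1$ incident to $T_1$ with fibre slope $\phi_1$ on $T_1$. Since $\Delta(\delta_m, \phi_1) = \Delta(\delta_0 + m\phi_1, \phi_1) = \Delta(\delta_0,\phi_1) = 1$ independently of $m$, the filling $P_1(\delta_m)$ is a Seifert filling along a slope of distance one from the fibre, hence absorbs the exceptional fibres of $P_1$ into a new Seifert piece (or collapses to a solid torus / product, which the reduction to finitely many exceptions already handles) without creating new JSJ tori. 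Consequently, for large $|m|$ the JSJ pieces of $Y_1(\delta_m)$ are $P_1(\delta_m)$ together with the JSJ pieces of $\overline{Y_1 \setminus P_1}$, exactly as in the $\lambda_Y \ne \phi_Y$ branch of the main argument; restricting $\mathcal{G}_m$ then gives the desired rational foliation on $Y_1$ of slope $\delta_m$.

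The main obstacle I anticipate is precisely this JSJ-compatibility step: one must confirm that filling $T_1$ along $\delta_m$ does not change the graph-manifold structure of $Y_1$ in a way that either destroys the incidence used by the inductive foliation or fails to detect the slope $\delta_m$ back on $T_1$. This is controlled by the fact that $\Delta(\delta_m, \phi_1) = 1$, so the argument used twice already (for $\lambda_Y \ne \phi_Y$ and for $|\partial P| \ge 4$) transfers directly; the only genuinely new input is the finite-exception count from Lemma \ref{distance 1}, which is why the claim is stated only for all but finitely many $m$.
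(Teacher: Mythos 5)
Your overall plan---fill $Y_1$ along $\delta_m$, discard finitely many bad fillings via Lemma \ref{distance 1}, apply the inductive hypothesis, and read the resulting foliation back on $Y_1$---is exactly the paper's plan, and your first two steps ($\Delta(\delta_m,\lambda_{Y_1})=1$ for all $m$, and the finite-exception count) are correct. But your JSJ-compatibility step rests on a genuine error: you declare that the piece $P_1$ of $Y_1$ incident to $T_1$ has fibre slope $\phi_1$ on $T_1$. This is false, and in fact impossible: $\phi_1$ is the fibre slope of $P$, which lies on the \emph{opposite} side of $T_1$, inside $Y$, and in the case at hand $\phi_1=\lambda_{Y_1}$. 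If the fibre slope $\phi_{Y_1}$ of $P_1$ were also $\phi_1$, the Seifert fibrations of $P$ and $P_1$ would match along $T_1$ and $T_1$ would not be a JSJ torus of $Y$. The paper's proof of the claim uses precisely this observation as its key input: $\Delta(\lambda_{Y_1},\phi_{Y_1})\geq 1$ because $\lambda_{Y_1}=\phi_Y$ and $T_1$ is a JSJ torus.

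The consequence of the misidentification is that you compute the wrong intersection number. The quantity controlling the filling of $P_1$ is $\Delta(\delta_m,\phi_{Y_1})$, not $\Delta(\delta_m,\phi_1)=1$; since $\delta_m=\delta_0+m\lambda_{Y_1}$ and $\Delta(\lambda_{Y_1},\phi_{Y_1})\geq 1$, one gets $\Delta(\delta_m,\phi_{Y_1})\geq |m|\,\Delta(\lambda_{Y_1},\phi_{Y_1})-\Delta(\delta_0,\phi_{Y_1})$, which grows with $|m|$. That growth is what places the claim in the regime of the first part of the main proof (the case $\lambda_Y\neq\phi_Y$): for large $|m|$ the core of the filling torus is an exceptional fibre of $P_1(\delta_m)$, the JSJ pieces of $Y_1(\delta_m)$ are $P_1(\delta_m)$ together with those of $\overline{Y_1\setminus P_1}$, and the inductive foliation, being horizontal, is transverse to the core and restricts to a rational foliation of slope $\delta_m$ on $Y_1$. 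Your constant distance-one computation, had its premise been true, would instead leave \emph{every} $\delta_m$ in the borderline situation that the main proof must treat separately (possible collapse of the filled piece to $S^1\times S^1\times I$ or a solid torus, with the JSJ changing), and this degeneration is not excluded by Lemma \ref{distance 1}, which rules out only reducible, $S^3$ and $\Sigma(2,3,5)$ fillings; so your parenthetical ``the reduction to finitely many exceptions already handles'' does not close that loophole. In short, the claim is saved not by $\Delta(\delta_m,\phi_1)=1$ but by the JSJ mismatch $\phi_{Y_1}\neq\phi_1=\lambda_{Y_1}$, which must be invoked explicitly.
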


\begin{proof}
Since $\Delta(\delta_m, \lambda_{Y_1}) = 1$ for all $m$, $Y_1(\delta_m)$ is a $\mathbb Z$-homology $3$-sphere. Let $\phi_{Y_1}$ be the primitive element of $H_1(T_1)$ representing the slope of the Seifert fibre of the piece $P_1$ of $Y_1$ incident to $T_1 = \partial Y_1$, then $\Delta(\lambda_{Y_1} ,\phi_{Y_1}) \geq 1$, since $\lambda_{Y_1} = \phi_{Y}$ and $T_1$ is a JSJ-torus of $Y$. Therefore our inductive hypothesis combines with Lemma \ref{distance 1} to show, as in the first part of the proof, that for all but at most fnitely many $m$, $Y_1$ admits a rational foliation of slope $\delta_m$.
\end{proof}

\begin{claim}\label{horizontal2}  $Y_2$ admits a rational foliation of slope $\gamma = p\lambda_{Y_2} + q\phi_2$ where $p$ and $q$ are relatively prime and non-zero.
\end{claim}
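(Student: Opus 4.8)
The plan is to prove Claim \ref{horizontal2} by mirroring the argument used for Claim \ref{horizontal1} and the first part of the proof of Theorem \ref{taut}, exploiting that $Y_2$ is itself a graph manifold $\mathbb Z$-homology solid torus with incompressible boundary and strictly fewer JSJ pieces than $W$. Since $\Delta(\phi_2, \lambda_{Y_2}) = 1$, the pair $\{\lambda_{Y_2}, \phi_2\}$ is a basis of $H_1(T_2)$, so I would look for the slope $\gamma$ among the slopes $\gamma_p = p\lambda_{Y_2} + \phi_2$. Each of these satisfies $\Delta(\gamma_p, \lambda_{Y_2}) = \Delta(\phi_2, \lambda_{Y_2}) = 1$, so $Y_2(\gamma_p)$ is a $\mathbb Z$-homology $3$-sphere, and the idea is to realise $\gamma_p$ by restricting to $Y_2$ a rational foliation produced on the closed manifold $Y_2(\gamma_p)$ by the inductive hypothesis.

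Let $\phi_{Y_2}$ be the primitive class representing the Seifert fibre of the piece $P_2$ of $Y_2$ incident to $T_2$. In the main case $\lambda_{Y_2} \neq \phi_{Y_2}$ the argument runs exactly as before: by Lemma \ref{distance 1} there are at most two (consecutive) values of $p$ for which $Y_2(\gamma_p)$ is $S^3$, $\Sigma(2,3,5)$ or reducible, so for the remaining $p$ the filling $Y_2(\gamma_p)$ is an irreducible graph manifold $\mathbb Z$-homology $3$-sphere which is neither $S^3$ nor $\Sigma(2,3,5)$ and has strictly fewer JSJ pieces than $W$; the inductive hypothesis then furnishes a rational foliation on $Y_2(\gamma_p)$. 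Since $\Delta(\gamma_p, \phi_{Y_2}) = |p(\lambda_{Y_2}\cdot \phi_{Y_2}) + \phi_2 \cdot \phi_{Y_2}| \to \infty$ as $|p| \to \infty$, for large $|p|$ the filled piece $P_2(\gamma_p)$ is a genuine JSJ piece of $Y_2(\gamma_p)$ and, the filling core being a Seifert fibre, the foliation restricts to a rational foliation on $Y_2$ detecting $\gamma_p$. Choosing such a $p \neq 0$ gives $\gamma = p\lambda_{Y_2} + \phi_2$, a slope with both coordinates non-zero and $\gcd(p,1) = 1$, as required.

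The hard part is the case $\lambda_{Y_2} = \phi_{Y_2}$, which reproduces the central difficulty of the theorem one level down. Here $\Delta(\gamma_p, \phi_{Y_2}) = \Delta(\phi_2, \phi_{Y_2}) = 1$ is independent of $p$, so filling along a slope at distance $1$ from $\lambda_{Y_2}$ no longer provides the control on the JSJ decomposition needed to restrict the foliation, while the slopes $\lambda_{Y_2} + q\phi_2$ whose distance to $\phi_{Y_2}$ does grow have fillings with $|H_1| = |q| \geq 2$, outside the reach of the inductive hypothesis for $\mathbb Z$-homology $3$-spheres. In this situation $Y_2$ is a graph manifold $\mathbb Z$-homology solid torus whose longitude equals the fibre slope of its incident piece, with $P_2$ forced (by Lemma \ref{horizontal}(3)) to be a product piece playing the role of $P$, and I would recurse the entire construction of this section on $Y_2$: decompose along $P_2$, detect compatible horizontal slopes on the tori bounding $P_2$ using Proposition \ref{strongly detected slopes} and Corollary \ref{s3 and s235}, and assemble a rational foliation on $Y_2$, the recursion terminating because the JSJ-piece count strictly decreases. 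The slope produced this way need not lie at distance $1$ from $\lambda_{Y_2}$, which is precisely why the statement is phrased with a general coprime non-zero pair $(p,q)$ rather than with $q = 1$. The delicate point, and the main obstacle, is to guarantee that the output slope has both coordinates non-zero, i.e. that $\gamma \notin \{\lambda_{Y_2}, \phi_2\}$, and is coprime, so that it is simultaneously detected on the $Y_2$-side and horizontal from the $P$-side in the eventual gluing.
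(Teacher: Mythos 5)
Your first case ($\Delta(\lambda_{Y_2},\phi_{Y_2})\geq 1$) coincides with the paper's argument: fill along $\gamma_p = p\lambda_{Y_2}+\phi_2$, apply Lemma \ref{distance 1} and the inductive hypothesis to get a rational foliation on $Y_2(\gamma_p)$ for all but finitely many $p$, and use $\Delta(\gamma_p,\phi_{Y_2})\to\infty$ to keep $P_2(\gamma_p)$ a genuine JSJ piece so the foliation restricts to $Y_2$ with slope $\gamma_p$. But in the case $\lambda_{Y_2}=\phi_{Y_2}$ --- which you rightly identify as the heart of the claim --- you do not actually give a proof. ``Recurse the entire construction on $Y_2$'' is not an available move: the statement being inducted on (Theorem \ref{taut}) concerns \emph{closed} graph manifold $\mathbb Z$-homology $3$-spheres, whereas what is needed here is a relative statement for the bounded manifold $Y_2$ with control on the boundary slope, namely that the detected slope avoid both $\lambda_{Y_2}$ and $\phi_2$. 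You explicitly flag this non-vanishing of both coordinates as ``the delicate point and the main obstacle,'' but you propose no mechanism to achieve it; as written, your recursion could perfectly well output a foliation whose slope on $T_2$ is $\phi_2$ or $\lambda_{Y_2}$, and the subsequent assembly in $P$ would collapse.

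The paper resolves exactly this point by closing $Y_2$ up into a smaller closed manifold with a carefully chosen plug. Let $E$ be the trefoil exterior, $\mu_E$ its meridian and $\nu_E$ the unique slope with $E(\nu_E)\cong\Sigma(2,3,5)$, so $\Delta(\mu_E,\nu_E)=1$. Glue $E$ to $Y_2$ identifying $\mu_E$ with $\lambda_{Y_2}$ and $\nu_E$ with $\phi_2$ (possible since $\Delta(\lambda_{Y_2},\phi_2)=1$). Then $W_2=E\cup Y_2$ is a $\mathbb Z$-homology $3$-sphere because $\Delta(\lambda_E,\mu_E)=1$; since $\phi_{Y_2}=\lambda_{Y_2}=\mu_E$ is not the fibre slope of $E$, the torus $\partial E=\partial Y_2$ is a JSJ torus, so $W_2$ is a graph manifold whose pieces are $E$ together with those of $Y_2$ --- strictly fewer than $W$, since $X$, $P$ and all the pieces of $Y_1$ have been replaced by the single piece $E$. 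The inductive hypothesis then gives a rational (hence horizontal) foliation on $W_2$ meeting $\partial Y_2$ in a fibration of some slope $\gamma$, which restricts to $Y_2$; and since $E$ admits no horizontal foliation of slope $\mu_E$ or $\nu_E$ (these are the endpoints of $\overline{U}$ in Corollary \ref{s3 and s235}(1)), $\gamma$ is automatically distinct from $\lambda_{Y_2}$ and $\phi_2$, i.e.\ $p,q\neq 0$. This is precisely the non-degeneracy your proposal needs and cannot produce; short of formulating and carrying a strengthened relative induction hypothesis for $\mathbb Z$-homology solid tori through the entire proof, the gap in your treatment of the case $\lambda_{Y_2}=\phi_{Y_2}$ is genuine.
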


\begin{proof}
Let $\phi_{Y_2}$ be the primitive element of $H_1(T_2)$ representing the slope of the Seifert fibre of the piece $P_2$ of $Y_2$ incident to $T_2 = \partial Y_2$.
If $\Delta(\lambda_{Y_2} ,\phi_{Y_2}) \geq 1$, the assertion follows from the proof of Claim \ref{horizontal1} by taking $\gamma = p\lambda_{Y_2} +\phi_2$, for some $\vert p \vert$ sufficiently large.

We consider now the case where $\lambda_{Y_2} = \phi_{Y_2}$. Let $E \subset S^3$ be the trefoil exterior, $\mu_E \in H_1(\partial E)$ its meridional slope and $\nu_E \in H_1(\partial E)$ the unique slope such that $E(\nu_E) \cong \Sigma(2,3,5)$. Then $\Delta(\mu_E, \nu_E) = 1$. Further, $E$ does not admit a horizontal foliation of slope $\mu_E$ or $\nu_E$. We build a $\mathbb Z$-homology $3$-sphere $W_2 = E \cup Y_2$ by gluing $E$ and $Y_2$ along their boundaries in such a way that the slope $\mu_E$ is identified with the slope $\lambda_{Y_2} $ and the slope $\nu_E$ is identified with the slope $\phi_2$.  Since the fiber slope $ \phi_{Y_2} = \lambda_{Y_2}$ is identified with the meridional slope $\mu_E$, the Seifert fibrations on $E$ and $P_2$ do not match up, and the torus $\partial Y_2 = \partial E$ is a  JSJ-torus of $W_2$. Hence $W_2$ is a graph $\mathbb Z$-homology $3$-sphere whose JSJ pieces are $E$ and the $JSJ$ pieces of $Y_2$. In particular, $W_2$ has fewer pieces than $W$. By the inductive hypothesis $W_2$ carries a rational foliation which intersects the JSJ torus $\partial Y_2 = \partial E$ in a circle fibration of some slope $\gamma$. Hence $Y_2$ admits a rational foliation of slope $\gamma$. Moreover $\Delta(\gamma,  \lambda_{Y_2}) \geq 1$ and $\Delta(\gamma,  \phi_2) \geq 1$ since $E$ cannot admit a horizontal foliation of slope $\mu_E$ or $\nu_E$.
 \end{proof}
Now we complete the proof of Theorem \ref{taut}. 

For $\vert m \vert$ sufficiently large, let $\delta_m = \delta_0 + m \phi_1 \in H_1(T_1)$ be the slope of a rational foliation on $Y_1$ given by Claim \ref{horizontal1}, and  $\gamma = p\lambda_{Y_2} + q\phi_2 \in H_1(T_2)$ the slope of a rational foliation on $Y_2$ given by Claim \ref{horizontal2}. Since $\lambda_Y = \phi_Y = \phi_1 = \phi_2$ and 
$\lambda_X + \delta_0 +  \lambda_{Y_2} = 0$ in $H_1(P)$, the sum $ \zeta_m + p\delta_m + \gamma = 0 \in H_1(P)$ where $\zeta_m = p\lambda_X - (pm + q)\lambda_Y \in H_1(T)$ is a primitive class. Thus there is a properly embedded, horizontal surface $F_m$ in $P$ with boundary curves of slope $\zeta_m, \delta_m$ and $\gamma$. Hence  $P$ fibres over the circle with fibre $F_m$ and $Y$ admits a rational foliation of slope $\zeta_m$ for large $|m|$. Now, it is easy to verify that $\lim_{|m|} [\zeta_m] = [\lambda_Y]$ and that for large $|m|$, reversing the sign of $m$ sends $[\zeta_m] $ from one side of $[\lambda_Y]$ to the other. Since $X(\lambda_Y)$ is a $\mathbb Z$-homology $3$-sphere, Proposition \ref{strongly detected slopes} and Corollary \ref{s3 and s235} imply that $X$ admits a horizontal foliation of slope $\delta_m$ for $m \gg 0$ or for $m \ll 0$ or for both. This completes the induction and the proof of Theorem \ref{taut}.
\end{proof}


\begin{thebibliography}{CCGLS} 


{\small

\bibitem[BC]{BC} S.~Boyer and A.~Clay, {\it On foliations, orders, representations, L-spaces and  graph manifolds}, in preparation.

\bibitem[BGW]{BGW} S.~Boyer, C.~McA.~Gordon and L.~Watson, {\it On L-spaces and left-orderable fundamental groups}, preprint 2011, arXiv:1107.5016  

\bibitem[BRW]{BRW} S~ Boyer, D.~Rolfsen and B.~Wiest, {\it Orderable 3-manifold groups}, Ann. Inst. Fourier (Greno- 
ble), {\bf 55} (2005), 243--288. 

\bibitem[BZ1]{BZ1} S.~Boyer and X.~Zhang, {\it Finite surgery on knots}, J. Amer. Math. Soc. {\bf 9} (1996), 1005--1050. 

\bibitem[BZ2]{BZ2} \bysame, {\it On Culler-Shalen seminorms and Dehn filling}, Ann. Math {\bf 148} (1998), 737--801. 

\bibitem[Br1]{Br1} M. Brittenham, {\it Essential laminations in Seifert-fibered spaces},Topology {\bf 32}  (1993), 61--85.

\bibitem[Br2]{Br2} \bysame, {\it Tautly foliated $3$-manifolds with no $\mathbb R$-covered foliations}, Foliations: geometry and dynamics (Warsaw, 2000), 213--224, World Sci. Publ., River Edge, NJ, 2002.

\bibitem[CM]{CM} S.~Caillat-Gibert and D.~Matignon, {\it Existence of taut foliations on Seifert fibered homology $3$-spheres}, preprint 2011, arXiv:1101.3710.

\bibitem[Ca1]{Ca1} D.~Calegari, {\it The geometry of $\mathbb R$-covered foliations}, Geom. \& Top.  {\bf 4} (2000), 457--515.

\bibitem[Ca2]{Ca2} \bysame, {\it Foliations with one-sided branching}, Geom. Dedicata  {\bf 96} (2003), 1--53.

\bibitem[Ca3]{Ca3} \bysame, {\it Promoting essential laminations}, Invent. Math. {\bf 166} (2006), 583--643.

\bibitem[CD]{CD} D.~Calegari and N.~Dunfield, {\it Laminations and groups of homeomorphisms of the circle}, Invent. Math. {\bf 152} (2003), 149--204.

\bibitem[CC1]{CC1} A.~Candel and L.~Conlon, {\it Foliations I}, Graduate Studies in Mathematics {\bf 23}, Amer. Math. Soc., 2003. 

\bibitem[CC2]{CC2} \bysame, {\it Foliations II}, Graduate Studies in Mathematics {\bf 60}, Amer. Math. Soc., 2003. 

\bibitem[CLW]{CLW} A. Clay, T. Lidman and L. Watson, {\it Graph manifolds, left-orderability and amalgamation}, preprint 2011, arXiv:1106.0486v1.

\bibitem[EHN]{EHN}
D.~Eisenbud, U.~Hirsch and W.~Neumann, 
{\it Transverse foliations on Seifert bundles and self-homeomorphisms 
of the circle}, Comm. Math. Helv. {\bf 56} (1981), 638--660.

\bibitem[Fen]{Fen} R. Fenley {\it Foliations, topology and geometry of 3-manifolds: $\mathbb{R}$-covered foliations and transverse pseudo-Anosov flows}, Comment. Math. Helv. {\bf77} (2002), 415-490.

\bibitem[Gh]{Gh} E. Ghys, {\it Groups acting on the circle}, Enseign. Math.  {\bf 47} (2001), 329--407.

\bibitem[GLu]{GLu} C.~McA.~Gordon and J.~Luecke, {\it Reducible manifolds and Dehn surgery}, Topology {\bf 35} (1996), 385--409.

\bibitem[God]{God} C.~Godbillon, {\it Feuilletages}, Progress in Mathematics {\bf 98}, Birkhauser Verlag, Basel, 1991. 

\bibitem[Go]{Go} S.~Goodman, {\it Closed leaves in foliations of codimension one}, Comm. Math. Helv. {\bf 50} (1975), 383--388.

\bibitem[HW]{HW} M.~Hedden and L.~Watson, {\it Does Khovanov homology detect the unknot?}, Amer. J. Math. {\bf 132} (2010), 1339--1345. 

\bibitem[Hom]{Hom} J.~Hom, {\it A note on cabling and L-space surgeries}, Alg. \& Geom. Top. {\bf 11} (2011), 219--223. 

\bibitem[Ja]{Ja} W.~Jaco, {\it Lectures on three-manifolds topology},  
CBMS Regional Conf. Ser. Math. {\bf 43} (1980).  

\bibitem[JN]{JN}
M.~Jankins and W.~Neumann,
{\it Rotation numbers and products of circle homomorphisms}, Math. Ann.
{\bf 271} (1985), 381--400.

\bibitem[LL]{LL}
A.~Levine and S.~Lewallen,
{\em Strong L-spaces and left orderability},
preprint 2011, arXiv:1110.0563.

\bibitem[LS]{LS}
P.~Lisca and A.~Stipsicz, {\em Ozsv\'ath-{S}zab\'o invariants and tight contact 3-manifolds. {III}},
J. Symplectic Geom. {\bf 5} (2007), 357--384.

\bibitem[Mat]{Mat} S. Matsumoto, {\it Numerical invariants for semiconjugacy of homeomorphisms of the circle}, Proc. Amer. Math. Soc.  {\bf 98} (1986), 163--168.

\bibitem[Na]{Na}
R.~Naimi, {\it Foliations transverse to fibers of Seifert manifolds},
Comm.\ Math.\ Helv.\ {\bf 69} (1994), 155--162.

\bibitem[OS1]{OS1} P.~Ozsv\'ath and Z.~Sz\'abo, {\it Holomorphic disks and genus bounds}, Geom. Topol. {\bf 8} (2004), 311--334.

\bibitem[OS2]{OS2} \bysame, \emph{Holomorphic disks and knot invariants}, Adv. Math. \textbf{186}
  (2004), 58--116.
  
\bibitem[OS3]{OS3} \bysame, \emph{Holomorphic disks and topological invariants
  for closed three-manifolds}, Ann. of Math. \textbf{159} (2004), 1027--1158.
  
\bibitem[OS4]{OS4}
\bysame, {\it On knot {F}loer homology and lens space surgeries}, Topology {\bf 44} (2005), 1281--1300.

\bibitem[OS5]{OS5} \bysame, {\it Knot Floer homology and rational surgeries},  preprint 2005, arXiv:GT/0504404.

\bibitem[Pla]{Pla} J.F.~Plante, {\it Foliations with measure preserving holonomy}, Ann. of Math. \textbf{102} (1975), 327--361.

}
\end{thebibliography}
\end{document}